\numberwithin{equation}{section}
\newtheorem{thm}{Theorem}
\newtheorem{cor}{Corollary}
\newtheorem{lemma}{Lemma}
\numberwithin{equation}{section}
\numberwithin{lemma}{section}
\numberwithin{prop}{section}
\begin{document}

{
 \title[Correlations of multiplicative functions with their partial sums]{Correlations of multiplicative functions with their partial sums}

  \author{Gordon Chavez}

\date{}
  \maketitle
}

\begin{abstract}
Let $\zeta(.)$ denote the Riemann zeta function and let $a(.)$ and $A(.)$ respectively denote a multiplicative function and its corresponding summatory function. We consider the correlation 
$$
\langle a(n)A(n-1) \rangle (T) = \frac{1}{\zeta(1+\delta(T))}\sum_{n\leq T^{1-c}}\frac{a(n)A(n-1)}{n^{1+\delta(T)}}
$$
where $0<c<1$ is arbitrary and $0<\delta(T)=O\left(T^{c-1}\right)$ is suitably chosen. Let $\mu(.)$ and $\lambda(.)$ denote the M{\"o}bius function and the Liouville function respectively while $M(.)$ and $L(.)$ denote their corresponding summatory functions. Under the Riemann hypothesis and simplicity of the nontrivial zeros $\rho=1/2+ i \gamma$ of $\zeta(s)$ we show that 
$$
\langle \mu(n)M(n-1) \rangle (T)= -\frac{3}{\pi^{2}}\left(1-T^{(c-1)\delta(T)}\right)+\sum_{0<\gamma<T}\frac{1}{\left|\rho\zeta'(\rho)\right|^{2}}
$$
and
$$
\langle \lambda(n)L(n-1) \rangle (T)=\frac{1}{2}\left(\frac{1}{\zeta^{2}(1/2)}-1+T^{(c-1)\delta(T)}\right)+\sum_{0<\gamma<T}\left|\frac{\zeta(2\rho)}{\rho\zeta'(\rho)}\right|^{2}
$$
as $T\rightarrow \infty$ where  $0\leq T^{(c-1)\delta(T)}<1$. These results combined with numerical observations suggest that there is anticorrelation between $\mu(n)$ and $M(n-1)$ as well as between $\lambda(n)$ and $L(n-1)$, where the correlation is computed using a logarithmic average. This would imply effective upper bounds on $\left|1/\zeta'(\rho)\right|$.
\end{abstract}

MSC 2020 subject classification: 11M26, 11N37
\setcounter{tocdepth}{1}

\section{Introduction}
The M{\"o}bius function $\mu(n)$ is defined for $n \in \mathbb{N}$ as
$$
\mu(n)=\begin{cases} 
1; & \textnormal{$n=1$} \\
0; & \textnormal{$n$ has a repeated prime factor (not square-free)} \\ (-1)^{\omega(n)}; & \textnormal{$n$ has $\omega(n)$ distinct prime factors (square-free)} \end{cases} \label{mobius}
$$
It has the important property that its sum over the divisors of any $n>1$ vanishes, i.e., $\sum_{m|n}\mu(m)=0$
for all $n>1$. The summatory M{\"o}bius function $M(n)$, also called the Mertens function, is defined as the sum $M(n)=\sum_{m \leq n}\mu(m)=\sum_{\substack{m \nmid n \\ m<n}}\mu(m).$ The straightforward vanishing of the divisor sum $\sum_{m|n}\mu(m)$ contrasts strongly with the behavior of the non-divisor sum $M(n)$, as the latter grows in average magnitude and fluctuates irregularly as $n$ increases due to the pseudorandomness properties of $\mu(n)$, which are an important topic in contemporary number theory \cite{chowla} \cite{darty} \cite{mauduit} \cite{sarnak} \cite{bsz} \cite{greentao} \cite{green} \cite{bourgain} \cite{saw shu}. The growth rate of $M(n)$ has been linked to many important results in number theory. Landau \cite{landau} noted that the result $M(n)= o(n)$ is equivalent to the prime number theorem (PNT) and Littlewood showed \cite{littlewood} that the stronger and unproven statement 
\begin{equation}
M(n)=O\left(n^{1/2+\varepsilon}\right) \label{mertens rh}
\end{equation} 
for all $\varepsilon>0$ is equivalent to RH. The $n^{\varepsilon}$ term in (\ref{mertens rh}) has been sharpened significantly in recent years \cite{landaumob} \cite{titchmarshmob} \cite{montmob} \cite{soundmob} while the $n^{1/2}$ term persists. This supports the popular heuristic that $M(n)$ behaves approximately like a random walk. Let $\rho$ denote the nontrivial zeros of the Riemann zeta function $\zeta(s)$, which, under the Riemann hypothesis (RH), can be written $\rho=1/2+ i\gamma$. Additionally note that the "simple zeros conjecture" (SZC) is the conjecture that all $\rho$ are simple. The behavior of $M(n)$ is known to be linked with the values of $1/\zeta'(\rho)$ chiefly through the fact that 
\begin{equation}
M(n)=\lim_{T\rightarrow \infty}\sum_{|\gamma|<T}\frac{n^{\rho}}{\rho\zeta'(\rho)}+O(1) \label{da fact}
\end{equation}
under RH and SZC.

The Liouville function $\lambda(n)$ is closely related to $\mu(n)$ but displays some key differences. It is defined for $n\in \mathbb{N}$ as
$$
\mu(n)=\begin{cases} 
1; & \textnormal{$n=1$} \\
(-1)^{\Omega(n)}; & \textnormal{$n$ has $\Omega(n)$ prime factors (counting multiplicities)} \end{cases} \label{liouville}
$$
The Liouville function also has a well-understood sum over the divisors of any $n>1$: $\sum_{m|n}\lambda(m)$ is equal to 1 for $n$ a perfect square and is equal to $0$ otherwise. The summatory Liouville function $L(n)$ is defined as the sum $L(n)=\sum_{m\leq n}\lambda(n).$
Similarly to $M(n)$, $L(n)$ increases in average magnitude and fluctuates irregularly as $n$ increases, again due to the pseudorandomness properties of $\lambda(n)$, which are also an important topic of contemporary research \cite{mrt} \cite{mr} \cite{tao} \cite{tt}. Very similarly to $M(n)$'s (\ref{mertens rh}), the unproven statement
\begin{equation}
L(n)=O\left(n^{1/2+\varepsilon}\right) \label{suml rh}
\end{equation}
for all $\varepsilon>0$ is equivalent to RH \cite{humphries}. Under RH and SZC $L(n)$ is linked with $1/\zeta'(\rho)$ through the formula \cite{fawaz} \cite{humphries}
\begin{equation}
L(n)=\frac{n^{1/2}}{\zeta(1/2)}+\lim_{T\rightarrow \infty}\sum_{|\gamma|<T}\frac{\zeta(2\rho)n^{\rho}}{\rho\zeta'(\rho)}+O(1),
\label{suml fact}
\end{equation}
which bears clear similarities to $M(n)$'s (\ref{da fact}) along with several differences. Most notably, the leading term combined with the fact that $\zeta(1/2)<0$ gives a negative bias to $L(n)$.

\subsection{Main results}
Our first result describes the correlation between $\mu(n)$ and $M(n-1)$:
\begin{thm}
Under RH and SZC, for any $0<c<1$ and suitably chosen $0<\delta(T)=O\left(T^{c-1}\right)$, 
\begin{equation}
\frac{1}{\zeta\left(1+\delta(T)\right)}\sum_{n\leq T^{1-c}}\frac{\mu(n)M(n-1)}{n^{1+\delta(T)}}=-\frac{3}{\pi^{2}}\left(1-T^{(c-1)\delta(T)}\right)+\sum_{0<\gamma<T}\frac{1}{\left|\rho\zeta'(\rho)\right|^{2}} \label{main lim 1}
\end{equation}
as $T\rightarrow \infty$ where $0\leq T^{(c-1)\delta(T)}<1$.
\label{mainmainthm}
\end{thm}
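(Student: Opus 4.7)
The plan is to convert the left-hand side into a double sum over non-trivial zeros via three steps: a telescoping identity that replaces the partial sum by $M(N)^{2}-Q(N)$, Abel summation to introduce the logarithmic weight, and insertion of the truncated explicit formula (\ref{da fact}) to extract the zero sum. From $M(n)^{2}-M(n-1)^{2}=\mu(n)(M(n)+M(n-1))=2\mu(n)M(n-1)+\mu(n)^{2}$, summing over $n\le N$ gives
\begin{equation*}
\sum_{n\le N}\mu(n)M(n-1)=\tfrac12\bigl(M(N)^{2}-Q(N)\bigr),
\end{equation*}
where $Q(N)=\sum_{n\le N}\mu(n)^{2}$ is the squarefree counting function. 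Abel summation with $N=T^{1-c}$ then yields
\begin{equation*}
\sum_{n\le N}\frac{\mu(n)M(n-1)}{n^{1+\delta}}=\frac{M(N)^{2}-Q(N)}{2N^{1+\delta}}+\frac{1+\delta}{2}\int_{1}^{N}\frac{M(x)^{2}-Q(x)}{x^{2+\delta}}\,dx.
\end{equation*}

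I would next handle the $Q$-piece using $Q(x)=6x/\pi^{2}+O(\sqrt{x})$. The elementary computation $\int_{1}^{N}x^{-1-\delta}\,dx=(1-N^{-\delta})/\delta$, combined with the Laurent expansion $1/\zeta(1+\delta)\sim\delta$ and the identification $N^{-\delta}=T^{(c-1)\delta(T)}$, collapses the $Q$-contribution to the claimed first term $-(3/\pi^{2})\bigl(1-T^{(c-1)\delta(T)}\bigr)$. Under the scaling $\delta(T)=O(T^{c-1})$, the $O(\sqrt{x})$ error and the boundary piece $Q(N)/(2N^{1+\delta})$ are absorbed into a vanishing remainder.

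For the $M(x)^{2}$-piece I would substitute the truncated form $M(x)=\tilde M(x,T)+E(x,T)$, where $\tilde M(x,T)=\sum_{|\gamma|<T}x^{\rho}/(\rho\zeta'(\rho))$. Squaring yields a double sum over pairs $(\rho,\rho')$ with factor $x^{\rho+\overline{\rho'}}=x^{1+i(\gamma-\gamma')}$ by RH. The diagonal $\gamma'=\gamma$ produces $2x\sum_{0<\gamma<T}1/|\rho\zeta'(\rho)|^{2}$ after pairing conjugate zeros; inserted into the Abel integral, the factor $(1-N^{-\delta})/\delta$ combines with $1/\zeta(1+\delta)\sim\delta$ to reproduce the zero sum on the right-hand side of (\ref{main lim 1}).

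The main obstacle will be controlling the off-diagonal pairs and the truncation error $E(x,T)$. For $\gamma'\ne\gamma$, oscillation gives $\int_{1}^{N}x^{-1-\delta+i(\gamma-\gamma')}\,dx=O(1/|\gamma-\gamma'|)$, so after multiplication by $1/\zeta(1+\delta)\sim\delta$ this contribution is $O(\delta)$ and vanishes. Proving the analogous bound for $E(x,T)$ uniformly in $x\le N=T^{1-c}$ is the crux: it relies on a standard truncated Perron-type explicit formula for $M(x)$ whose error is controlled in terms of $T$, and the scaling $\delta(T)=O(T^{c-1})$ must be chosen precisely so that the boundary piece $M(N)^{2}/(2N^{1+\delta})$, the off-diagonal double sum, and the $E(x,T)$ contributions all stay subordinate to the surviving diagonal and squarefree pieces that produce the two terms on the right-hand side of (\ref{main lim 1}).
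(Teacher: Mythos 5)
Your telescoping identity $\sum_{n\le N}\mu(n)M(n-1)=\tfrac12\bigl(M(N)^{2}-Q(N)\bigr)$ is correct and elegant, and Abel summation then turns the weighted sum into a boundary term plus an integral, which is a genuinely different route from the paper. The paper never squares $M$: it substitutes the explicit formula for $M(n-1)$ into the sum, interchanges the order, and evaluates $\sum_{n\le N}\mu(n)n^{-(1-\rho+\delta)}$ by the finite Dirichlet-series approximation to $1/\zeta(1-\rho+\delta)$. The zero sum then emerges because the simple pole of $1/\zeta(\overline{\rho}+\delta)$ as $\delta\to 0$ cancels the factor $1/\zeta(1+\delta)\sim\delta$ \emph{exactly}, with no residual $N$-dependence; the $(1-N^{-\delta})$ factor appears only in the squarefree piece, which is why the theorem's right-hand side is asymmetric.

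This asymmetry is where your argument breaks. In your scheme the diagonal of $\tilde M(x,T)^{2}$ gives $2x\sum_{0<\gamma<T}|\rho\zeta'(\rho)|^{-2}$, and after the Abel integral and division by $\zeta(1+\delta)$ the diagonal contribution is $(1-N^{-\delta})\sum_{0<\gamma<T}|\rho\zeta'(\rho)|^{-2}$, \emph{not} $\sum_{0<\gamma<T}|\rho\zeta'(\rho)|^{-2}$. But with the theorem's scaling $\delta(T)=O(T^{c-1})$ and $N=T^{1-c}$ one has $\delta(T)\log N=O(T^{c-1}\log T)\to 0$, so $N^{-\delta(T)}=T^{(c-1)\delta(T)}\to 1$ and $(1-N^{-\delta(T)})\to 0$. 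Thus the term you identify as "reproducing the zero sum" in fact tends to zero; the diagonal alone cannot yield the surviving $\sum_{0<\gamma<T}|\rho\zeta'(\rho)|^{-2}$ in (\ref{main lim 1}). For the two computations of the same quantity to agree, the off-diagonal pairs and the $2\tilde ME+E^{2}$ cross terms must supply a contribution of size $N^{-\delta}\sum_{0<\gamma<T}|\rho\zeta'(\rho)|^{-2}$, which you dismiss as $O(\delta)$. Moreover the oscillatory bound $\int_{1}^{N}x^{-1-\delta+i(\gamma-\gamma')}dx=O(1/|\gamma-\gamma'|)$ is useless when $|\gamma-\gamma'|\ll 1/\log N$: in that range the integral is $\asymp\log N$, and under SZC there is no lower bound on zero gaps, so the off-diagonal cannot be discarded without a much more careful analysis. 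The proposal therefore has a genuine gap in the diagonal/off-diagonal split and would not establish the stated form of (\ref{main lim 1}).
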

\hspace{-.4cm}The next result describes the correlation between $\lambda(n)$ and $L(n-1)$:
\begin{thm}
Under RH and SZC, for any $0<c<1$ and suitably chosen $0<\delta(T)=O\left(T^{c-1}\right)$, 
\begin{equation}
\frac{1}{\zeta\left(1+\delta(T)\right)}\sum_{n\leq T^{1-c}}\frac{\lambda(n)L(n-1)}{n^{1+\delta(T)}}=\frac{1}{2}\left(\frac{1}{\zeta^{2}(1/2)}-1+T^{(c-1)\delta(T)}\right)+\sum_{0<\gamma<T}\left|\frac{\zeta(2\rho)}{\rho\zeta'(\rho)}\right|^{2} \label{main lim 2}
\end{equation}
as $T\rightarrow \infty$ where $0\leq T^{(c-1)\delta(T)}<1$.
\label{mainmainthm2}
\end{thm}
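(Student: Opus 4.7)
The plan is to parallel the proof of Theorem \ref{mainmainthm}, replacing the Möbius explicit formula (\ref{da fact}) by the Liouville one (\ref{suml fact}). The key first step is the algebraic identity
\begin{equation*}
\lambda(n)L(n-1) = \tfrac{1}{2}\bigl(L(n)^{2} - L(n-1)^{2} - 1\bigr),
\end{equation*}
which follows from $L(n) - L(n-1) = \lambda(n)$ and $\lambda(n)^{2} = 1$, exactly mirroring the identity $\mu(n)M(n-1) = \tfrac{1}{2}(M(n)^{2}-M(n-1)^{2}-\mu(n)^{2})$ that drives Theorem \ref{mainmainthm}. Summing against $n^{-(1+\delta)}$ over $n\le N := T^{1-c}$ and dividing by $\zeta(1+\delta)$, the piece $-\tfrac{1}{2}\sum_{n\le N}n^{-(1+\delta)}/\zeta(1+\delta)$ produces the explicit contribution $-\tfrac{1}{2}(1-T^{(c-1)\delta})$ via Euler--Maclaurin combined with the pole expansion $\zeta(1+\delta) = 1/\delta + O(1)$.

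The remaining task is to evaluate $\tfrac{1}{2}\sum_{n\le N}(L(n)^{2}-L(n-1)^{2})/n^{1+\delta}$. Abel summation converts this into $\tfrac{1}{2}(1+\delta)\sum_{n\le N}L(n)^{2}/n^{2+\delta}$ plus a boundary contribution $L(N)^{2}/N^{1+\delta}$ that is $o(\zeta(1+\delta))$ under RH. Substituting (\ref{suml fact}) and expanding the square yields three pieces: the leading $n/\zeta(1/2)^{2}$, a cross term involving $\tfrac{2 n^{1/2}}{\zeta(1/2)}\sum_{|\gamma|<T}\zeta(2\rho)n^{\rho}/(\rho\zeta'(\rho))$, and the squared zero sum $\bigl|\sum_{|\gamma|<T}\zeta(2\rho)n^{\rho}/(\rho\zeta'(\rho))\bigr|^{2}$. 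The leading piece contributes $\zeta(1+\delta)/\zeta(1/2)^{2}$, which renormalizes to the explicit term $1/(2\zeta(1/2)^{2})$. The Hermitian diagonal $\rho'=\rho$ of the squared zero sum contributes $\sum_{|\gamma|<T}|\zeta(2\rho)/\rho\zeta'(\rho)|^{2}\cdot\zeta(1+\delta)$, which combined with the factor $\tfrac{1}{2}(1+\delta)/\zeta(1+\delta)$ and the conjugate-pair symmetry of zeros produces exactly $\sum_{0<\gamma<T}|\zeta(2\rho)/\rho\zeta'(\rho)|^{2}$.

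The main obstacle will be showing that the cross term, the off-diagonal zero contributions, and the $|\gamma|=T$ truncation error of (\ref{suml fact}) are collectively $o(\zeta(1+\delta))$. The critical off-diagonal estimate is
\begin{equation*}
\sum_{\rho\ne\rho'}\frac{\zeta(2\rho)\overline{\zeta(2\rho')}}{\rho\zeta'(\rho)\,\overline{\rho'\zeta'(\rho')}}\,\zeta\bigl(1+\delta+i(\gamma'-\gamma)\bigr),
\end{equation*}
which carries the extra $\zeta(2\rho)\overline{\zeta(2\rho')}$ factor absent in the Möbius case. Fortunately $|\zeta(1+it)|\ll\log|t|$ on $\Re s = 1$, so these factors inflate the Möbius-case bounds by only a polylogarithmic amount; combined with Gonek's lower bound (\ref{gonek1}) and the truncation $n\le T^{1-c}$, this should be absorbable. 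Verifying this carefully, along with the analogous bound for the cross term (which requires estimates on $\zeta(s)$ near the critical line), is the technical heart of the proof.
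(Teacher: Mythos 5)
Your proposal takes a genuinely different route from the paper, and it is built on an incorrect premise about how Theorem \ref{mainmainthm} is proved. You claim that the telescoping identity $\mu(n)M(n-1)=\tfrac{1}{2}(M(n)^2-M(n-1)^2-\mu(n)^2)$ ``drives'' the Möbius case, but the paper never squares $M(n)$ or $L(n)$: it substitutes the explicit formula (\ref{mertens big}) or (\ref{suml big}) for $M(n-1)$ or $L(n-1)$ \emph{linearly}, and the resulting inner sum $\sum_{n\leq N}\lambda(n)/n^{1-\rho+\delta}$ is collapsed via the Dirichlet-series identity $\sum_n \lambda(n)/n^{s}=\zeta(2s)/\zeta(s)$ applied at $s=1-\rho+\delta=\bar\rho+\delta$. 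Under RH this is a near-zero of $\zeta$, so $\zeta(\bar\rho+\delta)\sim\overline{\zeta'}(\rho)\,\delta$, and the $\delta\sim 1/\zeta(1+\delta)$ factor cancels it, producing $\zeta(2\rho)\zeta(2\bar\rho)/|\zeta'(\rho)|^2=|\zeta(2\rho)|^2/|\zeta'(\rho)|^2$ for each zero. The ``squaring'' thus happens zero-by-zero through the pole/zero cancellation; there is never a double sum over pairs $(\rho,\rho')$, so the off-diagonal problem you identify simply does not arise in the paper.

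That off-diagonal problem is, in your approach, a genuine and unresolved gap. After expanding $L(n)^2$ you face
\begin{equation*}
\sum_{\rho\ne\rho'}\frac{\zeta(2\rho)\overline{\zeta(2\rho')}}{\rho\zeta'(\rho)\,\overline{\rho'\zeta'(\rho')}}\,\zeta\bigl(1+\delta+i(\gamma'-\gamma)\bigr),
\end{equation*}
and after dividing by $\zeta(1+\delta)\sim 1/\delta$ you would need this sum to be $O(1/\delta)$ with a controllable constant. Controlling such weighted off-diagonal sums over zeros amounts to knowing pair-correlation--type information weighted by $1/|\zeta'(\rho)\zeta'(\rho')|$, which is well beyond RH and SZC; no argument is offered. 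Your appeal to Gonek's estimate (\ref{gonek1}) is also misdirected: that is a \emph{lower} bound on $J_{-1}(T)=\sum_{|\gamma|\le T}1/|\zeta'(\rho)|^2$, so it cannot be used to bound the diagonal or off-diagonal pieces from above, and if anything it warns that these sums may grow. The cross term between $n^{1/2}/\zeta(1/2)$ and the zero sum faces the same difficulty. So while your telescoping decomposition is algebraically correct and the diagonal bookkeeping you sketch does recover the stated main terms, the proposal is missing the central mechanism that the paper uses precisely to avoid these uncontrolled pair sums, and without a replacement for that mechanism the argument does not close.
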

\hspace{-.4cm}Note that $\frac{1}{2}\left(\frac{1}{\zeta^{2}(1/2)}-1\right)\approx -.2655$.

\subsection{Numerical results}
We present numerical evidence in Figure \ref{fig1}. In Figure \ref{fig1}'s left-hand plot we depict the first ten million values of 
\begin{equation}
\frac{1}{\log N}\sum_{n\leq N}\frac{\mu(n)M(n-1)}{n}. \label{log avg norm}
\end{equation}
along with the value $-\frac{3}{\pi^{2}}+\sum_{0<\gamma\leq\gamma_{n}}\frac{1}{\left|\rho\zeta'(\rho)\right|^{2}}$ with $n=1,000,000$, using values of $\zeta'(\rho)$ calculated by Hughes, Martin, \& Pearce-Crump \cite{hmpc}. This sum over the first one million $\gamma$ has the approximate value 
\begin{equation}
\sum_{0<\gamma\leq\gamma_{1,000,000}}\frac{1}{\left|\rho\zeta'(\rho)\right|^{2}} \approx .0145
\end{equation}
It is likely that taking the sum over a larger set of $\gamma$ would produce a sharper estimate for (\ref{log avg norm}). These numerical results indicate that the result (\ref{main lim 1}) approximates the values of (\ref{log avg norm}).

\begin{figure}
\centering
\includegraphics[width=\linewidth]{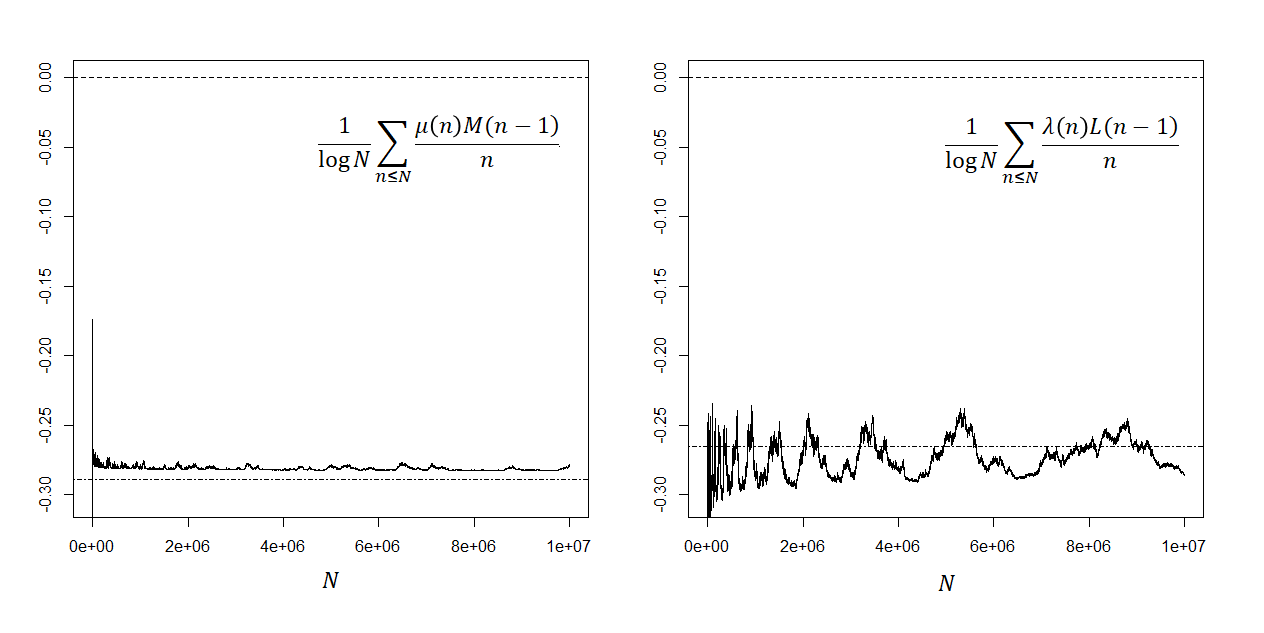}
\caption{Plots of (\ref{log avg norm}) [Left] and (\ref{log avg norm ll}) [Right] for $1\leq N \leq 10^{7}$ with horizontal lines (Dashed) at zero. In the left-hand plot there is additionally a line (Dot-Dashed) at $-\frac{3}{\pi^{2}}+\sum_{0<\gamma\leq \gamma_{n}}\frac{1}{\left|\rho\zeta'\left(\rho\right)\right|^{2}}$ with $n=1,000,000$. In the right-hand plot there is additionally a line (Dot-Dashed) at $\frac{1}{2}\left(\frac{1}{\zeta^{2}(1/2)}-1\right)$.}
\label{fig1}
\end{figure}

In Figure \ref{fig1}'s right-hand plot we depict the first ten million values of 
\begin{equation}
\frac{1}{\log N}\sum_{n\leq N}\frac{\lambda(n)L(n-1)}{n}, \label{log avg norm ll}
\end{equation}
along with the value $\frac{1}{2}\left(\frac{1}{\zeta^{2}(1/2)}-1\right)$. It is clear that this leading term in (\ref{main lim 2}) is a good approximation for the values of (\ref{log avg norm ll}). We will show below that the term $\frac{1}{2\zeta^{2}(1/2)}$, which has the net effect of increasing the correlation (\ref{main lim 2}), comes from the leading term $\frac{n^{1/2}}{\zeta(1/2)}$ in (\ref{suml fact}) that produces a bias in $L(n)$.

In Figure \ref{fig1} both (\ref{log avg norm}) and (\ref{log avg norm ll}) appear to be robustly negative. We additionally note that Theorem \ref{mainmainthm} has the immediate corollary
\begin{cor}
If (\ref{main lim 1}) holds and its right-hand side is negative, then 
\begin{equation}
\frac{1}{\zeta'(\rho)}=o\left(\left|\rho\right|\right) \label{base ub}
\end{equation}
as $\left|\gamma\right|\rightarrow \infty$.
\label{maincor}
\end{cor}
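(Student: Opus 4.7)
The plan is to convert the sign hypothesis into a uniform upper bound on the partial sums $\sum_{0<\gamma<T}|\rho\zeta'(\rho)|^{-2}$, and then to apply the elementary fact that the general term of a convergent nonnegative series must tend to zero.

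I would first solve (\ref{main lim 1}) for the zero-sum. The hypothesis that its right-hand side is negative, combined with the constraint $0 \leq T^{(c-1)\delta(T)} < 1$ supplied by Theorem \ref{mainmainthm}, will give, for all sufficiently large $T$,
\begin{equation*}
0 \;\leq\; \sum_{0<\gamma<T}\frac{1}{\left|\rho\zeta'(\rho)\right|^{2}} \;<\; \frac{3}{\pi^{2}}\bigl(1-T^{(c-1)\delta(T)}\bigr) \;\leq\; \frac{3}{\pi^{2}}.
\end{equation*}
Since the summands are nonnegative and the partial sums are uniformly bounded in $T$, the infinite series $\sum_{\gamma>0}|\rho\zeta'(\rho)|^{-2}$ converges, and its general term must therefore tend to zero. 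This is exactly $|\rho\zeta'(\rho)|^{-2}\to 0$ as $\gamma\to\infty$, i.e., $1/|\zeta'(\rho)|=o(|\rho|)$, which is (\ref{base ub}); the extension to $|\gamma|\to\infty$ is immediate from the symmetry $\rho\leftrightarrow\bar\rho$ under the functional equation.

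All the substantive content lives in Theorem \ref{mainmainthm}, so the corollary is a short deduction rather than a serious problem in its own right. The one step worth flagging is the invocation of the uniform bound $T^{(c-1)\delta(T)}\in[0,1)$ from Theorem \ref{mainmainthm} to promote the pointwise negativity hypothesis into a bound on the partial sums that is uniform in $T$; without that uniformity, monotone growth of the partial sums alone would not force the general term to vanish, and one would not recover (\ref{base ub}). No other obstacle is anticipated.
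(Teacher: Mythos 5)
Your proposal is correct and follows essentially the same route as the paper: the paper likewise notes that the $-\frac{3}{\pi^2}\bigl(1-T^{(c-1)\delta(T)}\bigr)$ term is $O(1)$ and that the zero-sum has nonnegative terms, concludes $\sum_{0<\gamma<T}|\rho\zeta'(\rho)|^{-2}=O(1)$, and then passes from convergence of a nonnegative series to vanishing of its general term. You spell out the numerical bound $\frac{3}{\pi^2}$ slightly more explicitly, but the argument is the same.
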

\hspace{-.4cm}while Theorem \ref{mainmainthm2} has the corollary
\begin{cor}
If (\ref{main lim 2}) holds and its right-hand side is negative, then under RH 
\begin{equation}
\frac{1}{\zeta'(\rho)}=o\left(\left|\rho\right|\log\log \left|\gamma\right|\right) \label{base ub 2}
\end{equation}
as $\left|\gamma\right|\rightarrow \infty$.
\label{maincor2}
\end{cor}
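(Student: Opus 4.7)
\medskip

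\noindent\emph{Proof plan.} The plan is to use the negativity hypothesis to extract convergence of the zero-side series $\sum_{\gamma>0}|\zeta(2\rho)/\rho\zeta'(\rho)|^{2}$, and then to convert the resulting pointwise decay of the summand into the claimed bound on $1/|\zeta'(\rho)|$ by inserting the classical RH-conditional estimate for $1/|\zeta(1+it)|$.

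For the first step, I would rearrange (\ref{main lim 2}) to isolate the zero sum. Since $0\le T^{(c-1)\delta(T)}<1$, the scalar term $\tfrac{1}{2}\bigl(1/\zeta^{2}(1/2)-1+T^{(c-1)\delta(T)}\bigr)$ lies in the bounded interval $\bigl[\tfrac{1}{2}(1/\zeta^{2}(1/2)-1),\tfrac{1}{2\zeta^{2}(1/2)}\bigr)$, approximately $[-0.266,\,0.234)$. The hypothesis that the right-hand side of (\ref{main lim 2}) is negative at such $T$ therefore forces
\[
\sum_{0<\gamma<T}\left|\frac{\zeta(2\rho)}{\rho\zeta'(\rho)}\right|^{2}<\frac{1}{2}\!\left(1-\frac{1}{\zeta^{2}(1/2)}-T^{(c-1)\delta(T)}\right)\le\frac{1}{2}\!\left(1-\frac{1}{\zeta^{2}(1/2)}\right).
\]
These partial sums are nonnegative and nondecreasing in $T$, so taking the hypothesis to be operative along a sequence $T\to\infty$ gives absolute convergence of the full series. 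A convergent sum of nonnegative terms indexed by the countable ordered set of positive zero ordinates must have its general term tending to zero, yielding
\[
\left|\frac{\zeta(2\rho)}{\rho\zeta'(\rho)}\right|\to 0\quad\text{as }|\gamma|\to\infty,\qquad\text{equivalently}\qquad\frac{1}{|\zeta'(\rho)|}=o\!\left(\frac{|\rho|}{|\zeta(2\rho)|}\right).
\]

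The second step supplies the $\log\log$ factor. Writing $2\rho=1+2i\gamma$ and invoking Littlewood's classical conditional estimate $1/|\zeta(1+it)|\ll\log\log|t|$ under RH (valid as $|t|\to\infty$), applied at $t=2\gamma$, gives $1/|\zeta(2\rho)|=O(\log\log|\gamma|)$. Composing with the $o$-estimate above—using that $o(\cdot)\cdot O(\cdot)=o(\cdot)$—yields the desired conclusion $1/|\zeta'(\rho)|=o(|\rho|\log\log|\gamma|)$.

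The main obstacle is the first step: one must ensure that the negativity hypothesis is genuinely operative for arbitrarily large $T$, rather than only for some finite window, since $T^{(c-1)\delta(T)}\to 1$ when $\delta(T)=O(T^{c-1})$ and so the scalar term drifts toward the positive value $1/(2\zeta^{2}(1/2))$. Once uniform boundedness of the partial sums is secured, the remaining steps are essentially mechanical, with Littlewood's $\log\log$ bound being the only substantive analytic input beyond the premise inherited from Theorem \ref{mainmainthm2}.
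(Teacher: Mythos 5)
Your proposal is correct and takes essentially the same route as the paper: both arguments observe that the scalar term is bounded, hence negativity of the right-hand side of (\ref{main lim 2}) forces the monotone nonnegative partial sums $\sum_{0<\gamma<T}|\zeta(2\rho)/(\rho\zeta'(\rho))|^{2}$ to be bounded (hence convergent), so the general term is $o(1)$, giving $1/|\zeta'(\rho)|=o(|\rho|/|\zeta(2\rho)|)$; both then insert Littlewood's RH-conditional $1/|\zeta(1+it)|=O(\log\log|t|)$ at $t=2\gamma$ to finish. Your closing remark about the scalar term drifting toward $1/(2\zeta^{2}(1/2))>0$ as $T^{(c-1)\delta(T)}\to 1$ is a legitimate observation about when the hypothesis could actually be satisfied — a point the paper does not address — but it concerns the potential vacuousness of the premise rather than the validity of the implication, so it does not affect the correctness of either proof.
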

\hspace{-.4cm}These corollaries combined with the numerical results in Figure \ref{fig1} notably suggest that there are effective upper bounds (\ref{base ub})-(\ref{base ub 2}) on $\left|1/\zeta'(\rho)\right|$. Finding effective upper bounds on $1/\left|\zeta'(\rho)\right|$ is a longstanding open problem. Little is known \cite{gonek1} \cite{hejhal} \cite{hko} \cite{bui gonek mili} about such upper bounds despite a great deal of progress on lower bounds of $1/\left|\zeta'(\rho)\right|$ \cite{gonek1} \cite{mandng1} \cite{titchmarsh} \cite{hlz} \cite{rudsound} \cite{gao zhao}. These numerical results must be interpreted carefully, however, as such results may be misleading. A famous example of this is Merten's assertion, based on numerical evidence, that $\left|M(N)\right|<\sqrt{N}$ for all $N$, which was proven false \cite{odlyzko} despite no counterexample being observed. 


We will next present the proof of Theorem \ref{mainmainthm} and Corollary \ref{maincor} followed by the proof of Theorem \ref{mainmainthm2} and Corollary \ref{maincor2}. These proofs use many of the same ideas and methods.


\section{Proofs}

\subsection{Proof outline(s)}
We first give a quick outline of the proofs of Theorems \ref{mainmainthm} and \ref{mainmainthm2}: For Theorem \ref{mainmainthm} we consider 
\begin{equation}
\frac{1}{\zeta(1+\delta)}\sum_{n\leq N}\frac{\mu(n)M(n-1)}{n^{1+\delta}} \label{sum of interest}
\end{equation}
with $\delta>0$, while for Theorem \ref{mainmainthm2} we consider 
\begin{equation}
\frac{1}{\zeta(1+\delta)}\sum_{n\leq N}\frac{\lambda(n)L(n-1)}{n^{1+\delta}} \label{sum of interest2}
\end{equation}
also with $\delta>0$. We write $M(n-1)$ and $L(n-1)$ using versions of (\ref{da fact}) and (\ref{suml fact}) for sums over $\rho$ with $\left|\gamma\right|<T$. We then evaluate the sum in (\ref{sum of interest}) by using versions of the elementary identities 
\begin{equation}
\sum_{n=1}^{\infty}\frac{\mu(n)}{n^{s}}=\frac{1}{\zeta(s)} \label{elem}
\end{equation}
and
\begin{equation}
\sum_{n=1}^{\infty}\frac{\mu^{2}(n)}{n^{s}}=\frac{\zeta(s)}{\zeta(2s)} \label{dlmf gen}
\end{equation}
for $n\leq N$. To evaluate the sum in (\ref{sum of interest2}) we use versions of the identities 
\begin{equation}
\sum_{n=1}^{\infty}\frac{\lambda(n)}{n^{s}}=\frac{\zeta(2s)}{\zeta(s)} \label{elem2}
\end{equation}
and
\begin{equation}
\sum_{n=1}^{\infty}\frac{\lambda^{2}(n)}{n^{s}}=\sum_{n=1}^{\infty}\frac{1}{n^{s}}=\zeta(s). \label{dlmf gen2}
\end{equation}
for $n\leq N$. For both theorems we then apply the Laurent expansion for $\zeta(1+\delta)$ along with a few other facts to give expressions for (\ref{sum of interest}) and (\ref{sum of interest2}). We then choose $N=T^{1-c}$ for some $0<c<1$ for both cases. Lastly, for each theorem, we make an especially careful choice of $\delta=\delta(T)$ such that $\delta(T) \rightarrow 0$ sufficiently quickly as $T\rightarrow \infty$. Taking $T\rightarrow \infty$ then gives the results  (\ref{main lim 1}) and (\ref{main lim 2}). 

\subsection{Proof of Theorem \ref{mainmainthm} and Corollary \ref{maincor}}
\label{sec2}
We first present three lemmas that describe the aforementioned formula for $M(n-1)$, formulae for finite versions of (\ref{elem}) and (\ref{dlmf gen}), and our choice of $\delta(T)$ respectively:
\subsubsection{Lemmas}
\begin{lemma}
Under RH and SZC 
\begin{equation}
M(n-1)=\sum_{\left|\gamma\right|<T}\frac{n^{\rho}}{\rho\zeta'(\rho)}-2-\sum_{k=1}^{\infty}\frac{\left(\frac{2\pi i}{n}\right)^{2k}}{(2k)!k\zeta(2k+1)}-\frac{\mu(n)}{2}+O\left(\frac{n^{2}}{T^{1-\epsilon}}\right) \label{mertens big}
\end{equation}
where $\epsilon>0$ is arbitrary. \label{titchlemma}
\end{lemma}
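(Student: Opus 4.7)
The plan is to derive the formula by applying the truncated Perron formula to the Dirichlet series $1/\zeta(s)=\sum_{n\ge 1}\mu(n)n^{-s}$ and then shifting the line of integration to the left, collecting residues of $n^{s}/(s\zeta(s))$. This is essentially Titchmarsh's classical derivation of the explicit formula for $M(x)$, specialized to the integer point $x=n$ and truncated at height $T$.

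First, I would apply Perron's formula at the integer $x=n$, with line of integration $\mathrm{Re}(s)=c=1+1/\log n$ and truncation at height $T$. Since Perron's formula at an integer yields the symmetric value $\tfrac{1}{2}\bigl(M(n-1)+M(n)\bigr)=M(n-1)+\mu(n)/2$, I obtain
$$M(n-1)+\frac{\mu(n)}{2}=\frac{1}{2\pi i}\int_{c-iT}^{c+iT}\frac{n^{s}}{s\,\zeta(s)}\,ds+\mathcal{E}_{\mathrm{P}},$$
where the standard truncation error is $\mathcal{E}_{\mathrm{P}}=O\bigl(n(\log n)^{2}/T\bigr)$, which is easily absorbed into the final $O(n^{2}/T^{1-\epsilon})$.

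Next, I would close the contour to the left by moving to $\mathrm{Re}(s)=-U$ for a large half-integer $U$ eventually sent to infinity. The residue theorem identifies the contour integral with the sum of residues inside the rectangle, plus contributions from the two horizontal segments at $\mathrm{Im}(s)=\pm T$ and from the left vertical segment. The poles of $n^{s}/(s\zeta(s))$ are exactly (i) the nontrivial zeros $\rho$ with $|\gamma|<T$, each simple by SZC with residue $n^{\rho}/(\rho\zeta'(\rho))$; (ii) the simple pole at $s=0$, where $\zeta(0)=-\tfrac{1}{2}$ gives residue $1/\zeta(0)=-2$; and (iii) the trivial zeros $s=-2k$ for $k\ge 1$. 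For (iii), differentiating the functional equation $\zeta(s)=2^{s}\pi^{s-1}\sin(\pi s/2)\Gamma(1-s)\zeta(1-s)$ and using $\sin(-\pi k)=0$ with derivative $\tfrac{\pi}{2}(-1)^{k}$ yields $\zeta'(-2k)=(-1)^{k}(2k)!\,\zeta(2k+1)/\bigl(2(2\pi)^{2k}\bigr)$, from which, using $i^{2k}=(-1)^{k}$,
$$\frac{n^{-2k}}{-2k\,\zeta'(-2k)}=-\frac{(2\pi i/n)^{2k}}{(2k)!\,k\,\zeta(2k+1)}.$$
Summing over $k\ge 1$ reproduces exactly the trivial-zero sum in the lemma statement. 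The far-left vertical segment contributes $O(n^{-U})\to 0$ as $U\to\infty$, since the polynomial growth of $1/\zeta$ on that line is dwarfed by $n^{-U}$.

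The main obstacle, and the step requiring the most care, is bounding the horizontal contour pieces to obtain the error $O(n^{2}/T^{1-\epsilon})$. On $\mathrm{Im}(s)=\pm T$, I would invoke the RH-conditional bound $1/|\zeta(\sigma+iT)|\ll T^{\epsilon}$ valid for $\sigma\ge 1/2+\delta$ (Titchmarsh, Ch.~14), after shifting $T$ by $O(1)$ to avoid ordinates of nontrivial zeros; for $\sigma<1/2$ the functional equation contributes an extra factor of order $T^{1/2-\sigma}$. Combining these with $|n^{s}/s|\le n^{\sigma}/T$ and integrating across $\sigma\in[-U,c]$, the dominant contribution comes from $\sigma$ near $c\approx 1$ and is of order $n^{1+\epsilon}/T$, with contributions from $\sigma<1/2$ strictly smaller. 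This comfortably fits inside $O(n^{2}/T^{1-\epsilon})$. Rearranging to isolate $M(n-1)$ and absorbing all error contributions into the single term $O(n^{2}/T^{1-\epsilon})$ then yields the formula stated in the lemma.
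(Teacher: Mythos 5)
Your proof follows essentially the same route as the paper's: Perron's formula evaluated at the integer $x=n$ (hence the $\mu(n)/2$ correction, since the truncated Perron integral equals $M(n-1)+\mu(n)/2$), a contour shift to the left collecting residues from the nontrivial zeros (simple under SZC, residue $n^{\rho}/(\rho\zeta'(\rho))$), the pole at $s=0$ with residue $1/\zeta(0)=-2$, and the trivial zeros at $s=-2k$ giving the $\zeta(2k+1)$-series, plus the RH-conditional bound $1/\zeta(\sigma+iT_{\nu})=O(T^{\epsilon})$ (after nudging $T$ to a good height $T_{\nu}$) for the horizontal segments — exactly the ingredients the paper draws from Titchmarsh and Heath-Brown, Thm.\ 14.27. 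One small slip worth fixing: for $\sigma<1/2$ the functional equation yields $1/|\zeta(\sigma+iT)|\asymp T^{\sigma-1/2}/|\zeta(1-\sigma-iT)|$, i.e., a \emph{decaying} factor $T^{\sigma-1/2}$, not an extra factor $T^{1/2-\sigma}$ as you wrote; the decaying factor is what actually supports your (correct) conclusion that the contribution from $\sigma<1/2$ is negligible, and it matches the estimate (\ref{titch4}) in the paper.
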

\begin{proof}
From Perron's formula (see e.g. \cite{titchmarsh} 3.12.2) we may write 
\begin{equation}
M(n)=\frac{1}{2\pi i}\int_{2-i\infty}^{2+i\infty}\frac{x^{s}}{s\zeta(s)}ds+O\left(\frac{n^{2}}{T}\right)+O\left(\frac{\log n}{T}\right)+O\left(\frac{1}{nT}\right). \label{perry}
\end{equation}
Titchmarsh \& Heath-Brown \cite{titchmarsh} (Thm. 14.27) then showed that under SZC one may apply the calculus of residues to give
\begin{align}
\frac{1}{2\pi i}\int_{2-i\infty}^{2+i\infty}\frac{x^{s}}{s\zeta(s)}ds=\sum_{\left|\gamma\right|<T}\frac{n^{\rho}}{\rho\zeta'(\rho)}-2-\sum_{k=1}^{\infty}\frac{\left(\frac{2\pi i}{n}\right)^{2k}}{(2k)!k\zeta(2k+1)}\nonumber\\
+O\left(\frac{1}{T}\int_{2}^{\infty}\left(\frac{2\pi}{n}\right)^{u}e^{u-(u-1/2)\log u}du\right)+\frac{1}{2\pi i}\int_{-1+iT}^{2+iT}\frac{n^{s}}{s\zeta(s)}ds. \label{titch1}
\end{align}
The first integral on (\ref{titch1})'s second line is $O\left(\frac{1}{n^{2}T}\right)$ and
\begin{equation}
\frac{1}{2\pi i}\int_{-1+iT}^{2+iT}\frac{n^{s}}{s\zeta(s)}ds=O\left(n^{2}\int_{-1+iT}^{2+iT}\left|\frac{1}{s\zeta(s)}\right|ds\right). \label{titch2}
\end{equation}
Under RH one then may choose a $T=T_{\nu}$ such that $\nu \leq T_{\nu}\leq \nu+1$ and 
\begin{equation}
\frac{1}{\zeta(s)}=O\left(t^{\epsilon}\right) \label{titch3}
\end{equation}
for $s=\sigma+iT_{\nu}$ with $1/2\leq \sigma \leq 2$ and arbitrary $\epsilon>0$ (\cite{titchmarsh} 14.16.2). Thus
\begin{equation}
\frac{1}{\zeta(s)}=O\left(\frac{\left|t\right|^{\sigma-1/2}}{\left|\zeta(1-s)\right|}\right)=O\left(t^{\epsilon}\right) \label{titch4}
\end{equation}
for $s=\sigma+iT_{\nu}$ with $-1\leq \sigma \leq 1/2$. Hence 
\begin{equation}
\int_{-1+iT_{\nu}}^{2+iT_{\nu}}\left|\frac{1}{s\zeta(s)}\right|ds=O\left(\frac{1}{T_{\nu}^{1-\epsilon}}\right). \label{titch5}
\end{equation}
Combining (\ref{titch1}), (\ref{titch2}), and (\ref{titch5}) with (\ref{perry}) then shows that 
\begin{equation}
M(n)=\sum_{\left|\gamma\right|<T}\frac{n^{\rho}}{\rho\zeta'(\rho)}-2-\sum_{k=1}^{\infty}\frac{\left(\frac{2\pi i}{n}\right)^{2k}}{(2k)!k\zeta(2k+1)}+\frac{\mu(n)}{2}+O\left(\frac{n^{2}}{T^{1-\epsilon}}\right) \label{pre mertens big}
\end{equation}
Since $M(n-1)=M(n)-\mu(n)$ we then have (\ref{mertens big}).
\end{proof}

\begin{lemma}
Under RH, for any $\varepsilon>0$ there exists an $N_{1}(\varepsilon)<\infty$ such that for all $N\geq N_{1}(\varepsilon)$ 
\begin{align}
\sum_{n\leq N}\frac{\mu(n)}{n^{s}}=\frac{1}{\zeta(s)}+\textnormal{Err}_{1}(s,N) \label{elem finite}
\end{align}
with 
\begin{align}
\left|\textnormal{Err}_{1}(s,N)\right|=\left(1+\left|\frac{s}{1/2+\varepsilon-s}\right|\right)O\left(\frac{1}{N^{s-1/2-\varepsilon}}\right) \label{elem err}
\end{align}
for $\textnormal{Re}(s)>1/2$ where the implied constant in (\ref{elem err})'s $O(.)$ term has no dependence on $s$. Additionally for all $N\geq N_{2}$ (unconditionally)
\begin{align}
\sum_{n\leq N}\frac{\mu^{2}(n)}{n^{s}}=\frac{\zeta(s)}{\zeta(2s)}+\textnormal{Err}_{2}(s,N) \label{dlmf gen finite}
\end{align}
with
\begin{align}
\textnormal{Err}_{2}(s,N)=-\frac{6}{\pi^{2}}\frac{1}{s-1}\frac{1}{N^{s-1}}+\left(1-\frac{s}{s-1/2}\right)O\left(\frac{1}{N^{s-1/2}}\right) \label{dlmf gen err}
\end{align}
for $\textnormal{Re}(s)>1$ where $N_{2}<\infty$ and the implied constant in (\ref{dlmf gen err})'s $O(.)$ term similarly has no dependence on $s$. For $s\in \mathbb{R}$ with $s>1$ we also have 
\begin{equation}
0\leq \left|\textnormal{Err}_{2}(s,N)\right|<\frac{\zeta(s)}{\zeta(2s)}. \label{dlmf gen err 2}
\end{equation} \label{elem lemma}
\end{lemma}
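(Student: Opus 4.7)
The plan is to prove both identities by Abel summation, using the Riemann hypothesis for the $\mu$-sum and the classical squarefree counting estimate for the $\mu^{2}$-sum.

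For the first identity, I would start from the fact that under RH we have $M(u)=O\!\left(u^{1/2+\varepsilon}\right)$ uniformly in $u$ with an implied constant depending only on $\varepsilon$. This guarantees that $\sum_{n=1}^{\infty}\mu(n)/n^{s}$ converges to $1/\zeta(s)$ throughout $\mathrm{Re}(s)>1/2$, so the error $\mathrm{Err}_{1}(s,N)$ is exactly the negative tail. Applying Abel summation to that tail yields
\begin{equation*}
\sum_{n>N}\frac{\mu(n)}{n^{s}}=-\frac{M(N)}{N^{s}}+s\int_{N}^{\infty}\frac{M(u)}{u^{s+1}}\,du,
\end{equation*}
and I would bound the boundary term by $O(N^{1/2+\varepsilon-s})$ and the integral by $|s|/|s-1/2-\varepsilon|$ times the same order. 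The factorization $\bigl(1+|s/(1/2+\varepsilon-s)|\bigr)O(N^{-(s-1/2-\varepsilon)})$ then just collects these two contributions, with the implied constant depending only on $\varepsilon$ (through the RH bound for $M$), not on $s$. The threshold $N_{1}(\varepsilon)$ is simply the point past which the RH bound is valid with a fixed constant.

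For the second identity I would use the unconditional estimate $Q(N):=\sum_{n\le N}\mu^{2}(n)=\tfrac{6}{\pi^{2}}N+R(N)$ with $R(N)=O(\sqrt{N})$, obtained by writing $\mu^{2}(n)=\sum_{d^{2}\mid n}\mu(d)$ and applying a standard hyperbola argument. Abel summation gives
\begin{equation*}
\sum_{n\le N}\frac{\mu^{2}(n)}{n^{s}}=\frac{Q(N)}{N^{s}}+s\int_{1}^{N}\frac{Q(u)}{u^{s+1}}\,du,
\end{equation*}
and subtracting the analogous identity with the integral extended to $\infty$ (which equals $\zeta(s)/\zeta(2s)$ for $\mathrm{Re}(s)>1$) produces
\begin{equation*}
\mathrm{Err}_{2}(s,N)=\frac{Q(N)}{N^{s}}-s\int_{N}^{\infty}\frac{Q(u)}{u^{s+1}}\,du.
\end{equation*}
Substituting $Q=\tfrac{6}{\pi^{2}}u+R$, the linear piece contributes exactly $-\tfrac{6}{\pi^{2}}(s-1)^{-1}N^{1-s}$, matching the stated main term, while the $R$-piece collapses to $R(N)/N^{s}-s\int_{N}^{\infty}R(u)u^{-s-1}\,du$. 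Tracking the cancellation for a generic $O(\sqrt{u})$ remainder gives the prefactor $\bigl(1-\tfrac{s}{s-1/2}\bigr)$ multiplying $O(N^{-(s-1/2)})$, with an absolute implied constant.

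The explicit estimate (\ref{dlmf gen err 2}) for real $s>1$ is essentially immediate from positivity: $\mu^{2}(n)\ge 0$, so the partial sums of $\sum \mu^{2}(n)/n^{s}$ are positive, strictly increasing, and bounded above by $\zeta(s)/\zeta(2s)$; hence $\mathrm{Err}_{2}(s,N)\in(-\zeta(s)/\zeta(2s),0]$, which yields the stated double inequality.

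The principal point requiring care is uniformity of the implied constants in $s$. One must verify that the constant in $M(u)=O(u^{1/2+\varepsilon})$ can be taken independent of $s$ (it depends only on $\varepsilon$), and likewise that the constant in $R(u)=O(\sqrt{u})$ is absolute. Once that uniformity is secured, the factors exhibiting the $s$-dependence, namely $1+|s/(1/2+\varepsilon-s)|$ and $1-s/(s-1/2)$, arise purely from the elementary evaluations $\int_{N}^{\infty}u^{-s-1/2-\varepsilon}\,du$ and $\int_{N}^{\infty}u^{-s-1/2}\,du$ and the matching boundary terms, so they incur no hidden $s$-dependence in the big-$O$ constants.
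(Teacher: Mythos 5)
Your proposal matches the paper's proof in both structure and substance: for the $\mu$-sum you invoke the RH bound $M(u)=O_{\varepsilon}(u^{1/2+\varepsilon})$ and apply partial summation to the tail $\sum_{n>N}\mu(n)n^{-s}$, producing exactly the boundary term $-M(N)N^{-s}$ plus the integral, which together give the stated error factor; for the $\mu^{2}$-sum you use the unconditional squarefree count $Q(N)=\tfrac{6}{\pi^{2}}N+O(\sqrt{N})$ and the same Abel-summation decomposition, with the linear piece yielding the explicit $-\tfrac{6}{\pi^{2}}(s-1)^{-1}N^{1-s}$ term and the remainder piece yielding the $O(N^{1/2-s})$ term, and you close with the same positivity/monotonicity argument for the final bound. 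This is essentially identical to the paper's argument, differing only in that you also sketch how the squarefree count is derived (the paper simply cites Hardy and Wright).
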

\begin{proof}
We first note from (\ref{mertens rh}) that, under RH, for any $\varepsilon>0$ there exists an $N_{1}(\varepsilon)<\infty$ such that for all $N\geq N_{1}(\varepsilon)$
\begin{equation}
\left|M(N)\right|\leq C_{1}(\varepsilon) N^{1/2+\varepsilon} \label{mertens rh useful}
\end{equation}
where $0<C_{1}(\varepsilon)<\infty$. We thus choose some $\varepsilon>0$ and apply partial summation with (\ref{mertens rh useful}) to show that for $N\geq N_{1}(\varepsilon)$ (\ref{elem}) has the finite series version
\begin{align}
\sum_{n\leq N}\frac{\mu(n)}{n^{s}}=\frac{1}{\zeta(s)}-\sum_{n>N}^{\infty}\frac{\mu(n)}{n^{s}}=\frac{1}{\zeta(s)}+\frac{M(N)}{N^{s}}-s\int_{N}^{\infty}\frac{M(u)}{u^{s+1}}du \nonumber \\ =\frac{1}{\zeta(s)}+\textnormal{Err}_{1}(s,N) \label{pre elem finite}
\end{align}
with 
\begin{align}
\left|\textnormal{Err}_{1}(s,N)\right|\leq C_{1}(\varepsilon)\left(1+\left|\frac{s}{1/2+\varepsilon-s}\right|\right)\frac{1}{N^{s-1/2-\varepsilon}} \label{pre elem err}
\end{align}
for $\textnormal{Re}(s)>1/2$. Combining (\ref{pre elem finite}) and (\ref{pre elem err}) proves (\ref{elem finite})-(\ref{elem err}). To prove the next statements we note the distribution of square-free integers (see e.g. \cite{hardy wright}):
\begin{equation}
\sum_{n\leq N}\mu^{2}(n)=\frac{6}{\pi^{2}}N+R(N) \label{sq free}
\end{equation}
where
\begin{equation}
\left|R(N)\right|\leq C_{2}N^{1/2} \label{sq free 2}
\end{equation}
for all $N\geq N_{2}$ where $N_{2}$ and $C_{2}$ are finite. We apply partial summation to write 
\begin{align}
\sum_{n\leq N}\frac{\mu^{2}(n)}{n^{s}}=\frac{\zeta(s)}{\zeta(2s)}-\sum_{n>N}^{\infty}\frac{\mu^{2}(n)}{n^{s}} \nonumber \\=\frac{\zeta(s)}{\zeta(2s)}+\frac{\sum_{n\leq N}\mu^{2}(n)}{N^{s}}-s\int_{N}^{\infty}\frac{\sum_{n\leq u}\mu^{2}(n)}{u^{s+1}}du \nonumber \\ =\frac{\zeta(s)}{\zeta(2s)}+\textnormal{Err}_{2}(s,N). \label{pre dlmf gen finite}
\end{align}
Setting $N\geq N_{2}$ and applying (\ref{sq free})-(\ref{sq free 2}) in (\ref{pre dlmf gen finite})'s second line then proves (\ref{dlmf gen finite})-(\ref{dlmf gen err}). The bounds (\ref{dlmf gen err 2}) are clear from the fact that $\sum_{n}\frac{\mu^{2}(n)}{n^{s}}$ is real, strictly positive, and convergent for $s\in \mathbb{R}$ with $s>1$. 
\end{proof}
Note that (\ref{elem err})'s $\varepsilon$  is distinct from (\ref{mertens big})'s $\epsilon$. 

\begin{lemma}
Let
\begin{equation}
X(T)=\frac{1}{\underset{\gamma<T}{\textnormal{max}}\left(\underset{k\rightarrow \infty}{\lim\sup}\left|\frac{\overline{\zeta^{(k+2)}}(\rho)}{(k+2)!}\right|^{1/k}\right)},  \label{x bound}
\end{equation}
\begin{align}
S(T)=\textnormal{max}\left\{\sum_{0<\gamma<T}\frac{1}{\left|\rho\zeta'(\rho)\right|^{2}},\sup_{0\leq x \leq \frac{X(T)}{2}}\left\{\left|\sum_{\left|\gamma\right|<T}\frac{x\sum_{k=2}^{\infty}\overline{\zeta^{(k)}}(\rho)\frac{x^{k-2}}{k!}}{\rho\left|\zeta'(\rho)\right|^{2}\zeta\left(\bar{\rho}+x\right)}\right|, \right. \right. \nonumber \\ \left. \left. \left|\sum_{\left|\gamma\right|<T}\frac{1}{\rho\zeta'(\rho)}\left|\frac{1+x-\rho}{\varepsilon-1/2-x+\rho}\right|\right|\right\} \right\},
\label{da sup}
\end{align}
and
\begin{equation}
\delta(T)=\frac{1}{N}\textnormal{min}\left\{\frac{1}{\textnormal{max}\left\{S(T),1\right\}},X(T)\right\} \label{sneaky}
\end{equation}
with $\varepsilon>0$. Then under RH and SZC (\ref{sneaky})'s $\delta(T)$ is greater than 0 for all $T<\infty$. Additionally 
\begin{equation}
O\left(\delta(T)\right)\sum_{0<\gamma<T}\frac{1}{\left|\rho\zeta'(\rho)\right|^{2}}=O\left(\frac{1}{N}\right), \label{probterm1}
\end{equation}
\begin{equation}
O\left(\delta(T)\right)\sum_{\left|\gamma\right|<T}\frac{\delta(T)\sum_{k=2}^{\infty}\overline{\zeta^{(k)}}(\rho)\frac{\left(\delta(T)\right)^{k-2}}{k!}}{\rho \left|\zeta'(\rho)\right|^{2}\zeta\left(1+\delta(T)-\rho\right)}=O\left(\frac{1}{N}\right), \label{probterm2}
\end{equation}
\begin{equation}
O\left(\frac{\delta(T)}{N^{\delta(T)-\varepsilon}}\right)\sum_{\left|\gamma\right|<T}\frac{1}{\rho\zeta'(\rho)}\left|\frac{1+\delta(T)-\rho}{\varepsilon-1/2-\delta(T)+\rho}\right|=O\left(\frac{1}{N^{1+\delta(T)-\varepsilon}}\right), \label{probterm3}
\end{equation}
and 
\begin{equation}
\delta(T)=O\left(\frac{1}{N}\right) \label{sneaked}
\end{equation}
as $N\rightarrow \infty$. 
\label{delta lemma}
\end{lemma}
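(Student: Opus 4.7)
The plan is to verify each assertion by unpacking the definitions of $X(T)$, $S(T)$, and $\delta(T)$. For positivity of $\delta(T)$, it suffices to show $X(T)>0$ and $S(T)<\infty$. Since $\zeta$ is analytic on $\mathbb{C}\setminus\{1\}$, the Taylor series of $\zeta$ about $\bar{\rho}$ has radius of convergence $|1-\rho|$, which gives $\limsup_{k\to\infty}|\overline{\zeta^{(k+2)}}(\rho)/(k+2)!|^{1/k}=1/|1-\rho|$. Maximising over the finite set $\{\gamma<T\}$ then yields $X(T)=\min_{\gamma<T}|1-\rho|>0$. For $S(T)<\infty$, the first sum is finite because SZC makes each term finite and there are finitely many $\gamma$ with $|\gamma|<T$; the third is bounded for the same reason combined with the elementary continuity of its integrand in $x\in[0,X(T)/2]$; and the second is the supremum over a compact interval of a continuous function (subject to the removable-singularity issue discussed below). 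Hence $\delta(T)>0$ whenever $N<\infty$.

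The three estimates (\ref{probterm1})--(\ref{probterm3}) then follow from the single inequality $\delta(T)\cdot\max\{S(T),1\}\leq 1/N$, which is immediate from the definition of $\delta(T)$. By inspection, each of the three sums appearing on the left-hand sides is --- up to the identification $1+\delta(T)-\rho=\bar{\rho}+\delta(T)$ valid under RH --- precisely one of the three quantities whose maximum defines $S(T)$, evaluated at $x=\delta(T)$. Thus each such sum is bounded in absolute value by $S(T)$, so multiplying by $O(\delta(T))$ yields $O(1/N)$ in (\ref{probterm1}) and (\ref{probterm2}), while the extra prefactor $N^{\varepsilon-\delta(T)}$ in (\ref{probterm3}) gives $O(N^{\varepsilon-1-\delta(T)})$. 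For (\ref{probterm2}) I must also note that $\delta(T)\leq X(T)/N\leq X(T)/2$ for $N\geq 2$, so that $x=\delta(T)$ lies in the supremum's domain. Finally (\ref{sneaked}) is immediate from $\delta(T)\leq (1/N)\cdot 1/\max\{S(T),1\}\leq 1/N$.

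The main obstacle is verifying finiteness of the second term in the maximum defining $S(T)$, since under RH the denominator $\zeta(\bar{\rho}+x)$ vanishes at $x=0$ (as $\bar{\rho}$ is itself a nontrivial zero). The resolution is that the singularity is removable: using the Taylor expansion $\zeta(\bar{\rho}+x)=\overline{\zeta'(\rho)}\,x+O(x^2)$ near $x=0$ (with $\overline{\zeta'(\rho)}\neq 0$ by SZC), the inner sum rewrites as $(\zeta(\bar{\rho}+x)-\overline{\zeta'(\rho)}\,x)/x^{2}$, so after multiplying by the outer factor of $x$ and dividing by $\zeta(\bar{\rho}+x)$ the expression extends continuously to $x=0$, in fact vanishing linearly there. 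Since $X(T)/2<|1-\rho|$ lies strictly inside the Taylor radius, the power series converges uniformly on $[0,X(T)/2]$ and the finite sum over $|\gamma|<T$ is bounded. Once these analytic points are settled, the remaining content of the lemma reduces to the algebraic bookkeeping indicated above.
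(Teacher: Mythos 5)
Your proposal follows essentially the same strategy as the paper: show $X(T)>0$ and $S(T)<\infty$, then derive all the $O(\cdot)$ estimates directly from the definition of $\delta(T)$ together with the observation that each sum in (\ref{probterm1})--(\ref{probterm3}) is bounded by $S(T)$ at $x=\delta(T)$. In fact your version is slightly more careful than the paper's on two points. First, you explicitly identify the $\limsup$ in (\ref{x bound}) as the reciprocal of the radius of convergence of the Taylor series of $\zeta$ about $\bar{\rho}$, giving $X(T)=\min_{\gamma<T}|1-\rho|>0$; the paper only asserts finiteness of the $\limsup$ from analyticity. Second, you correctly flag and resolve the $0/0$ issue at $x=0$ in the second term of (\ref{da sup}) as a removable singularity and then invoke continuity on a compact interval to conclude the supremum is finite; the paper simply writes $\frac{x}{\zeta(\bar{\rho}+x)}<\infty$ for all $x\geq 0$ without addressing the $x=0$ case or the passage from pointwise finiteness to a finite supremum. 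One small error: after multiplying the inner sum $\bigl(\zeta(\bar{\rho}+x)-\overline{\zeta'(\rho)}\,x\bigr)/x^{2}$ by the outer factor $x$ and dividing by $\zeta(\bar{\rho}+x)$, the resulting expression does not vanish at $x=0$; it tends to the finite constant $\overline{\zeta''(\rho)}/\bigl(2\,\overline{\zeta'(\rho)}\bigr)$. This does not affect your argument, since all you need is boundedness on the compact interval, but "vanishing linearly" should be "tending to a finite limit."
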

\begin{proof}
We quickly note that under RH
\begin{align}
\sum_{\left|\gamma\right|<T}\frac{1}{\rho\left|\zeta'(\rho)\right|^{2}}=\sum_{0<\gamma<T}\left(\frac{1}{1/2+i\gamma}+\frac{1}{1/2-i\gamma}\right)\frac{1}{\left|\zeta'(\rho)\right|^{2}} \nonumber \\ =\sum_{0<\gamma<T}\left(\frac{1/2-i\gamma}{1/4+\gamma^{2}}+\frac{1/2+i\gamma}{1/4+\gamma^{2}}\right)\frac{1}{\left|\zeta'(\rho)\right|^{2}}=\sum_{0<\gamma<T}\frac{1}{\left|\rho \zeta'(\rho)\right|^{2}}.
\label{write it out}
\end{align}
We next note that under SZC $\zeta'(\rho)\neq 0$ for all $\rho$ and hence
\begin{equation}
\sum_{0<\gamma<T}\frac{1}{\left|\rho \zeta'(\rho)\right|^{2}}<\infty \label{easy sum}
\end{equation}
for all $T<\infty$. We note that, under RH, $1-\rho=\bar{\rho}$, and hence under RH and SZC
\begin{equation}
\frac{x}{\zeta(1-\rho+x)}=\frac{x}{\zeta\left(\bar{\rho}+x\right)}<\infty \label{phew bound}
\end{equation}
for all $x\geq 0$. We also note from the analyticity of $\zeta(s)$ that $\overline{\zeta^{(k)}}(\rho)<\infty$ for all $k$ and all $\rho$ and hence that
\begin{equation}
\left|\sum_{k=2}^{\infty}\overline{\zeta^{(k)}}(\rho)\frac{x^{k-2}}{k!}\right|=\left|\sum_{k=0}^{\infty}\overline{\zeta^{(k+2)}}(\rho)\frac{x^{k}}{(k+2)!}\right|<\infty \label{mini hard sum}
\end{equation}
for all 
\begin{equation}
x<\frac{1}{\underset{k\rightarrow \infty}{\lim\sup}\left|\frac{\overline{\zeta^{(k+2)}}(\rho)}{(k+2)!}\right|^{1/k}}. \label{mini x bound}
\end{equation}
Thus (\ref{mini hard sum}) holds for all $\rho$ with $\gamma<T$ if 
\begin{equation}
x<\frac{1}{\underset{\gamma<T}{\textnormal{max}}\left(\underset{k\rightarrow \infty}{\lim\sup}\left|\frac{\overline{\zeta^{(k+2)}}(\rho)}{(k+2)!}\right|^{1/k}\right)}=X(T).
\end{equation}
By the analyticity of $\zeta(s)$ we have
\begin{equation}
\underset{k\rightarrow \infty}{\lim \sup}\left|\frac{\overline{\zeta^{(k)}}(\rho)}{k!}\right|^{1/k}<\infty \label{by analyticity}
\end{equation}
and hence (\ref{mini x bound})'s right-hand side and (\ref{x bound})'s $X(T)$ are greater than $0$. From (\ref{phew bound})-(\ref{by analyticity}) we may thus conclude that 
\begin{equation}
\left|\sum_{\left|\gamma\right|<T}\frac{x\sum_{k=2}^{\infty}\overline{\zeta^{(k)}}(\rho)\frac{x^{k-2}}{k!}}{\rho \left|\zeta'(\rho)\right|^{2}\zeta\left(\bar{\rho}+x\right)}\right|<\infty \label{harder sum}
\end{equation}
for all $0\leq x<X(T)$ and $T<\infty$ under RH and SZC. Additionally it is clear that under SZC 
\begin{equation} 
\left|\sum_{\left|\gamma\right|<T}\frac{1}{\rho\zeta'(\rho)}\left|\frac{1+x-\rho}{\varepsilon-1/2-x+\rho}\right|\right|<\infty \label{clear sum}
\end{equation}
for all $\varepsilon>0$, $0\leq x<\infty$, and $T<\infty$. By (\ref{easy sum}), (\ref{harder sum}), and (\ref{clear sum}) we have (\ref{da sup})'s $S(T)<\infty$ for all $T<\infty$ and hence (\ref{sneaky})'s $\delta(T)>0$ for all $T<\infty$. The results (\ref{probterm1})-(\ref{sneaked}) are clear from (\ref{da sup})-(\ref{sneaky}).
\end{proof}

We next proceed with the proof of Theorem \ref{mainmainthm}:

\subsubsection{Proof of Theorem \ref{mainmainthm}}
\begin{proof}
We first apply Lemma \ref{titchlemma}'s result (\ref{mertens big}) to write
\begin{align}
\sum_{n\leq N}\frac{\mu(n)M(n-1)}{n^{1+\delta}}=\sum_{\left|\gamma\right|<T}\frac{1}{\rho\zeta'(\rho)}\sum_{n\leq N}\frac{\mu(n)}{n^{1-\rho+\delta}}-2\sum_{n\leq N}\frac{\mu(n)}{n^{1+\delta}}\nonumber \\ -\sum_{k=1}^{\infty}\frac{\left(2\pi i\right)^{2k}}{(2k)!k\zeta(2k+1)}\sum_{n\leq N}\frac{\mu(n)}{n^{2k+1+\delta}}-\frac{1}{2}\sum_{n\leq N}\frac{\mu^{2}(n)}{n^{1+\delta}}+O\left(\frac{1}{T^{1-\epsilon}}\sum_{n\leq N}n^{1-\textnormal{Re}(\delta)}\right). \label{piece 1 raw}
\end{align}
We then apply Lemma \ref{elem lemma}'s results (\ref{elem finite})-(\ref{elem err}) and (\ref{dlmf gen finite})-(\ref{dlmf gen err}) in (\ref{piece 1 raw}) to write
\begin{align}
\sum_{n\leq N}\frac{\mu(n)M(n-1)}{n^{1+\delta}}\nonumber \\=\sum_{\left|\gamma\right|<T}\frac{1}{\rho\zeta'(\rho)}\left(\frac{1}{\zeta(1-\rho+\delta)}+\left(1+\left|\frac{1+\delta-\rho}{\varepsilon-1/2-\delta+\rho}\right|\right)O\left(\frac{1}{N^{\textnormal{Re}(\delta)-\varepsilon}}\right)\right)\nonumber \\ -2\left(\frac{1}{\zeta(1+\delta)}+\left(1+\left|\frac{1+\delta}{\varepsilon-1/2-\delta}\right|\right)O\left(\frac{1}{N^{1/2+\textnormal{Re}(\delta)-\varepsilon}}\right)\right) \nonumber \\-\sum_{k=1}^{\infty}\frac{\left(2\pi i\right)^{2k}}{(2k)!k\zeta(2k+1)}\left(\frac{1}{\zeta(2k+1+\delta)} \right. \nonumber \\ \left. +\left(1+\left|\frac{2k+1+\delta}{\varepsilon-2k-1/2-\delta}\right|\right)O\left(\frac{1}{N^{5/2+\textnormal{Re}(\delta)-\varepsilon}}\right)\right) \nonumber \\-\frac{1}{2}\left(\frac{\zeta(1+\delta)}{\zeta(2+2\delta)}-\frac{6}{\pi^{2}}\frac{1}{\delta N^{\textnormal{Re}(\delta)}}+\left(1-\frac{1+\delta}{1/2+\delta}\right)O\left(\frac{1}{N^{1/2+\textnormal{Re}(\delta)}}\right)\right) \nonumber \\ +O\left(\frac{N^{2-\textnormal{Re}(\delta)}}{T^{1-\epsilon}}\right)\label{piece 1 done}
\end{align}
for $\textnormal{Re}(\delta)>0$ and $N\geq \textnormal{max}\left\{N_{1}(\varepsilon), N_{2}\right\}$ where we choose $0<\varepsilon<1/2$. Similarly to (\ref{elem err}) and (\ref{dlmf gen err}), there is no hidden dependence on $\delta$ in (\ref{piece 1 done})'s $O(.)$ terms. 

We next consider real $\delta>0$ and note from the Laurent expansion for $\zeta(s)$ that as $\delta \rightarrow 0$
\begin{equation}
\frac{1}{\zeta(1+\delta)}=\frac{1}{\frac{1}{\delta}+O(1)}=\delta+O\left(\delta^{2}\right). \label{laurent}
\end{equation}
Additionally, since $\zeta^{(k)}(\bar{s})=\overline{\zeta^{(k)}}(s)$ and again, under RH, $1-\rho=\bar{\rho}$, we have  
\begin{equation}
\zeta(1-\rho+\delta)=\zeta\left(\bar{\rho}+\delta\right)=\overline{\zeta'}(\rho)\delta+\sum_{k=2}^{\infty}\overline{\zeta^{(k)}}(\rho)\frac{\delta^{k}}{k!}. \label{zz sum help}
\end{equation}
From (\ref{laurent}), (\ref{zz sum help}), and the fact that 
\begin{equation}
\frac{1}{a+b}=\frac{1}{a}-\frac{b}{a^{2}+ab} \label{fractions!}
\end{equation}
we have 
\begin{align}
\frac{1}{\zeta(1+\delta)\zeta(1-\rho+\delta)}=\frac{\delta}{\zeta\left(\bar{\rho}+\delta\right)}\left(1+O\left(\delta\right)\right) \nonumber \\=\frac{\delta}{\overline{\zeta'}(\rho)\delta+\sum_{k=2}^{\infty}\overline{\zeta^{(k)}}(\rho)\frac{\delta^{k}}{k!}}\left(1+O(\delta)\right)=\frac{1}{\overline{\zeta'}(\rho)+\sum_{k=2}^{\infty}\overline{\zeta^{(k)}}(\rho)\frac{\delta^{k-1}}{k!}}\left(1+O(\delta)\right)\nonumber \\ 
=\left(\frac{1}{\overline{\zeta'}(\rho)}-\frac{\sum_{k=2}^{\infty}\overline{\zeta^{(k)}}(\rho)\frac{\delta^{k-1}}{k!}}{\overline{\zeta'}^{2}(\rho)+\overline{\zeta'}(\rho)\sum_{k=2}^{\infty}\overline{\zeta^{(k)}}(\rho)\frac{\delta^{k-1}}{k!}}\right)\left(1+O(\delta)\right)\nonumber \\
=\left(\frac{1}{\overline{\zeta'}(\rho)}-\frac{\delta\sum_{k=2}^{\infty}\overline{\zeta^{(k)}}(\rho)\frac{\delta^{k-1}}{k!}}{\overline{\zeta'}(\rho)\zeta\left(\bar{\rho}+\delta\right)}\right)\left(1+O(\delta)\right)\nonumber \\
=\left(\frac{1}{\overline{\zeta'}(\rho)}-\frac{\delta^{2}\sum_{k=2}^{\infty}\overline{\zeta^{(k)}}(\rho)\frac{\delta^{k-2}}{k!}}{\overline{\zeta'}(\rho)\zeta\left(\bar{\rho}+\delta\right)}\right)\left(1+O(\delta)\right). \label{long one}
\end{align}
Also 
\begin{equation}
\frac{1}{\zeta(2+2\delta)}=\frac{1}{\zeta(2)}+O(\delta). \label{lame help}
\end{equation}
We then divide (\ref{piece 1 done}) by $\zeta(1+\delta)$ and apply (\ref{laurent}), (\ref{long one}), and (\ref{lame help}) along with the fact that $\zeta(2)=\frac{\pi^{2}}{6}$ to write 
\begin{align}
\frac{1}{\zeta(1+\delta)}\sum_{n\leq N}\frac{\mu(n)M(n-1)}{n^{1+\delta}}\nonumber \\=\left(\sum_{\left|\gamma\right|<T}\frac{1}{\rho\left|\zeta'(\rho)\right|^{2}}-\delta\sum_{\left|\gamma\right|<T}\frac{\delta\sum_{k=2}^{\infty}\overline{\zeta^{(k)}}(\rho)\frac{\delta^{k-2}}{k!}}{\rho \left|\zeta'(\rho)\right|^{2}\zeta\left(\bar{\rho}+\delta\right)}\right)\left(1+O(\delta)\right)\nonumber \\ +O\left(\frac{\delta}{N^{\delta-\varepsilon}}\right)\left(\sum_{\left|\gamma\right|<T}\frac{1}{\rho\zeta'(\rho)}+\sum_{\left|\gamma\right|<T}\frac{1}{\rho\zeta'(\rho)}\left|\frac{1+\delta-\rho}{\varepsilon-1/2-\delta+\rho}\right|\right) \nonumber \\ +O\left(\delta^{2}\right)+O\left(\frac{\delta}{N^{1/2+\delta-\varepsilon}}\right) \nonumber \\ +O\left(\delta\right)+O\left(\frac{\delta}{N^{5/2+\delta-\varepsilon}}\right)  \nonumber \\
-\frac{3}{\pi^{2}}\left(1-\frac{1}{N^{\delta}}\right)+O\left(\delta\right)+O\left(\frac{\delta}{N^{1/2+\delta}}\right)+O\left(\frac{\delta N^{2-\delta}}{T^{1-\epsilon}}\right),\label{we there!}
\end{align}
for $\textnormal{Re}(\delta)>0$ and $N\geq \textnormal{max}\left\{N_{1}(\varepsilon),N_{2}\right\}$. Again, there is no hidden dependence on $\delta$ in (\ref{we there!})'s $O(.)$ terms. The first three terms on (\ref{we there!})'s sixth line come from (\ref{piece 1 done})'s sixth line and we thus note from Lemma \ref{elem lemma}'s result (\ref{dlmf gen err 2}) that
\begin{equation}
0\leq \left|\frac{3}{\pi^{2}}\frac{1}{N^{\delta}}+O\left(\frac{\delta}{N^{1/2+\delta}}\right)\right|<\frac{3}{\pi^{2}}. \label{dlmf gen err 3}
\end{equation}
The result on (\ref{we there!})'s fourth line is achieved from (\ref{piece 1 done})'s third line with (\ref{laurent}) and the fact that 
\begin{equation}
\left|\frac{1+\delta}{\varepsilon-1/2-\delta}\right|=O(1) \label{easy simple fact}
\end{equation}
for all $\delta\geq 0$ and $0<\varepsilon<1/2+\delta$. Meanwhile (\ref{we there!})'s fifth line is achieved from (\ref{piece 1 done})'s fourth and fifth lines with the fact that 
\begin{equation}
\sum_{k=1}^{\infty}\left|\frac{\left(2\pi i\right)^{2k}}{(2k)!k\zeta(2k+1)\zeta(2k+1+\delta)} \right|< \sum_{k=1}^{\infty}\frac{\left(2\pi\right)^{2k}}{(2k)!k\zeta(2k+1)}=O(1) \label{k sum bound}
\end{equation}
for all $\delta \geq 0$ as well as the fact that
\begin{equation}
\sum_{k=1}^{\infty}\frac{(2\pi)^{2k}}{(2k)!k\zeta(2k+1)}\left|\frac{2k+1+\delta}{\varepsilon-2k-1/2-\delta}\right|=O(1)
\end{equation}
for any $\delta\geq 0$ with $0<\varepsilon<5/2+\delta$.

We next note that, although $\sum_{\rho}\frac{1}{\rho\zeta'(\rho)}$ is not absolutely convergent, from (\ref{pre mertens big}) with $n=1$ we have the conditional convergence
\begin{equation}
\lim_{T\rightarrow \infty}\sum_{\left|\gamma\right|<T}\frac{1}{\rho\zeta'(\rho)}=\frac{5}{2}+\sum_{k=1}^{\infty}\frac{(2\pi i)^{2k}}{(2k)!k\zeta(2k+1)}=O(1). \label{thats convenient}
\end{equation}
We apply (\ref{thats convenient}) along with (\ref{write it out}) in (\ref{we there!}) to give
\begin{align}
\frac{1}{\zeta(1+\delta)}\sum_{n\leq N}\frac{\mu(n)M(n-1)}{n^{1+\delta}}\nonumber \\=\left(\sum_{0<\gamma<T}\frac{1}{\left|\rho \zeta'(\rho)\right|^{2}}-\delta\sum_{\left|\gamma\right|<T}\frac{\delta\sum_{k=2}^{\infty}\overline{\zeta^{(k)}}(\rho)\frac{\delta^{k-2}}{k!}}{\rho \left|\zeta'(\rho)\right|^{2}\zeta\left(\bar{\rho}+\delta\right)}\right)\left(1+O(\delta)\right)\nonumber \\ +O\left(\frac{\delta}{N^{\delta-\varepsilon}}\right)\left(O(1)+\sum_{\left|\gamma\right|<T}\frac{1}{\rho\zeta'(\rho)}\left|\frac{1+\delta-\rho}{\varepsilon-1/2-\delta+\rho}\right|\right) \nonumber \\ +O\left(\delta^{2}\right)+O\left(\frac{\delta}{N^{1/2+\delta-\varepsilon}}\right) \nonumber \\ +O\left(\delta\right)+O\left(\frac{\delta}{N^{5/2+\delta-\varepsilon}}\right)  \nonumber \\
-\frac{3}{\pi^{2}}\left(1-\frac{1}{N^{\delta}}\right)+O\left(\delta\right)+O\left(\frac{\delta}{N^{1/2+\delta}}\right)+O\left(\frac{\delta N^{2-\delta}}{T^{1-\epsilon}}\right),\label{we there forreal!}
\end{align}
We next set $\delta=\delta(T)$ in (\ref{we there forreal!}) with $\delta(T)$ defined by Lemma \ref{delta lemma}'s (\ref{x bound})-(\ref{sneaky}), apply (\ref{probterm1})-(\ref{sneaked}), and simplify, recalling that we chose $0<\varepsilon<1/2$, to then write
\begin{align}
\frac{1}{\zeta\left(1+\delta(T)\right)}\sum_{n\leq N}\frac{\mu(n)M(n-1)}{n^{1+\delta(T)}}=-\frac{3}{\pi^{2}}\left(1-\frac{1}{N^{\delta(T)}}\right)+\sum_{0<\gamma<T}\frac{1}{\left|\rho \zeta'(\rho)\right|^{2}} \nonumber \\ +O\left(\frac{1}{N}\right)+O\left(\frac{1}{N^{1+O(1/N)-\varepsilon}}\right)+O\left(\frac{N^{1-O(1/N)}}{T^{1-\epsilon}}\right). \label{we really there!}
\end{align}
We next note that 
\begin{equation}
\frac{N^{1-O(1/N)}}{T^{1-\epsilon}}=O\left(\frac{N}{T^{1-\epsilon}}\right) \label{doi}
\end{equation}
and we thus set 
\begin{equation}
N=T^{1-\epsilon-c} \label{nset}
\end{equation} 
for some $0<c<1-\epsilon$ so that 
\begin{equation}
\frac{N}{T^{1-\epsilon}}=O\left(\frac{1}{T^{c}}\right). \label{nnice}
\end{equation}
We then apply (\ref{doi}), (\ref{nset}), and (\ref{nnice}) in (\ref{we really there!}) and take $T\rightarrow \infty$ to give the result (\ref{main lim 1}), where we have combined the arbitrary $\epsilon$ and $c$. Since $\frac{3}{\pi^{2}N^{\delta(T)}}$ is the last surviving term from (\ref{dlmf gen err 3}) in (\ref{we really there!}) with $T\rightarrow \infty$, we may conclude that $0\leq \frac{1}{N^{\delta(T)}}<1$, which, with (\ref{nset}), completes the proof. 
\end{proof}

The proof of Corollary \ref{maincor} then follows quickly from the result (\ref{main lim 1}):

\subsubsection{Proof of Corollary \ref{maincor}}
\begin{proof}
We note that
\begin{equation}
-\frac{3}{\pi^{2}}\left(1-T^{(c-1)\delta(T)}\right)=O(1)
\end{equation}
as $T\rightarrow \infty$. We also note that the series (\ref{write it out}) is strictly positive. Therefore, if (\ref{main lim 1})'s right-hand side is negative, then 
\begin{equation}
\sum_{0<\gamma<T}\frac{1}{\left|\rho\zeta'(\rho)\right|^{2}}=O(1)
\end{equation}
and hence 
\begin{equation}
\frac{1}{\left|\rho\zeta'(\rho)\right|^{2}}=o(1) \label{obviously we have}
\end{equation}
as $\gamma \rightarrow \infty$, which immediately gives (\ref{base ub}), completing the proof. 
\end{proof}

\subsection{Proof of Theorem \ref{mainmainthm2} and Corollary \ref{maincor2}}
Similarly to the prior section, we present three lemmas that respectively describe the formula for $L(n-1)$, formulae for finite versions of (\ref{elem2}) and (\ref{dlmf gen2}), and our choice of $\delta(T)$:

\subsubsection{Lemmas}
\begin{lemma}
Under RH and SZC 
\begin{equation}
L(n-1)=\frac{n^{1/2}}{\zeta(1/2)}+\sum_{\left|\gamma\right|<T}\frac{\zeta(2\rho)n^{\rho}}{\rho\zeta'(\rho)}-\frac{\lambda(n)}{2}+O(1)+O\left(\frac{1}{n^{1/2}}\right)+O\left(\frac{n^{2}}{T^{1-\epsilon}}\right) \label{suml big}
\end{equation}
where $\epsilon>0$ is arbitrary. \label{humphlemma}
\end{lemma}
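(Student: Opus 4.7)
The plan is to mirror the proof of Lemma \ref{titchlemma}, substituting the Dirichlet series $\sum_n \lambda(n)/n^s = \zeta(2s)/\zeta(s)$ for $\sum_n \mu(n)/n^s = 1/\zeta(s)$ and tracking how the new factor $\zeta(2s)$ alters the residues and the error analysis. The mechanical steps are the same: start from the Perron integral for $L(n)$, shift the contour leftward across the critical strip and past $s=0$, and collect residues along the way.

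Concretely, I would begin with the analog of (\ref{perry}),
\begin{equation*}
L(n) = \frac{1}{2\pi i}\int_{2-i\infty}^{2+i\infty} \frac{\zeta(2s)\, n^{s}}{s\,\zeta(s)}\, ds + O\!\left(\frac{n^{2}}{T}\right) + O\!\left(\frac{\log n}{T}\right) + O\!\left(\frac{1}{nT}\right),
\end{equation*}
and shift the line of integration to $\mathrm{Re}(s) = -1$. Under RH and SZC the integrand has four classes of singularities inside this rectangle. There is a simple pole at $s=1/2$ from the pole of $\zeta(2s)$, whose residue is $n^{1/2}/\zeta(1/2)$; simple poles at each nontrivial zero $s=\rho$ with $|\gamma|<T$, with residue $\zeta(2\rho)\,n^{\rho}/(\rho\,\zeta'(\rho))$; a simple pole at $s=0$ from the factor $1/s$, with residue $\zeta(0)/\zeta(0) = 1 = O(1)$; and \emph{no} contribution at the trivial zeros $s=-2k$ for $k\geq 1$, since $\zeta(2s)=\zeta(-4k)$ also vanishes to first order there, making the would-be pole of $1/\zeta(s)$ removable. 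This last observation is the essential structural difference from Lemma \ref{titchlemma} and is why no infinite series in $(2\pi i/n)^{2k}$ survives in (\ref{suml big}).

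Applying the same $T=T_{\nu}$ device used in (\ref{titch3})--(\ref{titch5}), together with a convexity/Phragm\'en--Lindel\"of bound on $\zeta(2s)$ in the strip $-1\leq\sigma\leq 2$, the two horizontal segments between $2\pm iT_{\nu}$ and $-1\pm iT_{\nu}$ contribute $O(n^{2}/T^{1-\epsilon})$, the polynomial growth of $\zeta(2s)$ being absorbed into the arbitrary $\epsilon$. The leftmost vertical integral on $\mathrm{Re}(s)=-1$ is bounded using $|n^{s}|=1/n$ together with the functional-equation bound on $|\zeta(2s)/s\zeta(s)|$ there, producing the $O(1/n^{1/2})$ term in (\ref{suml big}). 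Combining everything yields $L(n)$ with a $+\lambda(n)/2$ correction coming from the standard integer-endpoint adjustment in Perron's formula, and finally the identity $L(n-1)=L(n)-\lambda(n)$ converts this into the $-\lambda(n)/2$ that appears on the right of (\ref{suml big}).

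The main obstacle is the delicate bookkeeping for $\zeta(2s)$: one must verify unambiguously that $\zeta(2s)$ cancels the trivial-zero singularities of $1/\zeta(s)$ for every $k\geq 1$, and that convexity bounds for $\zeta(2s)$ on $\sigma+iT_{\nu}$ do not worsen the horizontal-segment error beyond $n^{2}/T^{1-\epsilon}$. Once these are in hand, the proof proceeds in essentially complete parallel with the argument of Lemma \ref{titchlemma}.
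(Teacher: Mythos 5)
Your overall strategy is the right one in spirit, and it does coincide with the method underlying the formula that the paper ultimately uses, but the paper itself takes a much shorter route: it does not re-derive the contour integral at all. It simply cites Humphries \cite{humphries} for the formula
\begin{equation*}
L(n)=\frac{n^{1/2}}{\zeta(1/2)}+\sum_{\left|\gamma\right|<T}\frac{\zeta(2\rho)n^{\rho}}{\rho\zeta'(\rho)}+\frac{\lambda(n)}{2}+\frac{1}{2\pi i}\int_{\epsilon-i\infty}^{\epsilon+i\infty}\frac{\zeta(2s)n^{s}}{s\zeta(s)}ds+O\!\left(1+\frac{n\log n}{T}+\frac{n}{T^{1-\epsilon}\log n}\right),
\end{equation*}
cites Fawaz \cite{fawaz} and Humphries again for the fact that the remaining vertical integral on $\mathrm{Re}(s)=\epsilon$ equals $1+O(n^{-1/2})$, then observes $1+\frac{n\log n}{T}+\frac{n}{T^{1-\epsilon}\log n}=O(1)+O(n^2/T^{1-\epsilon})$ and finishes with $L(n-1)=L(n)-\lambda(n)$. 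Your residue calculations at $s=1/2$, at the nontrivial zeros, and at $s=0$ match the ones packaged into Humphries' formula, and your observation that $\zeta(2s)$ cancels every trivial zero of $\zeta(s)$ is the correct structural explanation for why no $\sum_k (2\pi i/n)^{2k}$ series survives here. So as an \emph{outline} of where the cited formula comes from, the proposal is sound.

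There is, however, a genuine gap in the one place where you depart from the cited argument. You propose to shift all the way to $\mathrm{Re}(s)=-1$ and to bound the leftmost vertical integral by ``$|n^s|=1/n$ together with the functional-equation bound,'' claiming this yields the $O(1/n^{1/2})$ term. This does not work. First, on $\mathrm{Re}(s)=-1$ you have $|n^s|=n^{-1}$, which would give $O(1/n)$ rather than $O(1/n^{1/2})$ if the remaining factor were integrable. Second, and more seriously, it is not integrable: by the functional equation $\zeta(-2+2it)\asymp|t|^{5/2}$ and $\zeta(-1+it)\asymp|t|^{3/2}$ up to bounded factors, so $\zeta(2s)/\zeta(s)\asymp|t|$ and $\left|\zeta(2s)/(s\zeta(s))\right|\asymp 1$ on this line; the vertical integral diverges absolutely and cannot be bounded this way. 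The correct treatment, as in Fawaz and Humphries, stops the contour at $\mathrm{Re}(s)=\epsilon$ for a small $\epsilon>0$ (thus picking up only the pole of $\zeta(2s)$, the nontrivial zeros, and the pole at $s=0$), and evaluates the residual integral $\frac{1}{2\pi i}\int_{\epsilon-i\infty}^{\epsilon+i\infty}\frac{\zeta(2s)n^s}{s\zeta(s)}\,ds=1+O(n^{-1/2})$ by a separate argument in which the cancellation of trivial zeros is used to shift the integral further left and sum the resulting convergent series; it is not a crude absolute-value bound on a single vertical line. You would also want to verify that the Perron truncation error for $\zeta(2s)/\zeta(s)$ matches the cited $O(1+n\log n/T+n/(T^{1-\epsilon}\log n))$ rather than simply copying the error from the $\mu$ case, since the generating Dirichlet series now has polynomial growth coming from $\zeta(2s)$.
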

\begin{proof}
Humphries \cite{humphries} uses similar methods to those mentioned in the proof of (\ref{pre mertens big}) to prove that under RH and SZC
\begin{align}
L(n)=\frac{n^{1/2}}{\zeta(1/2)}+\sum_{\left|\gamma\right|<T}\frac{\zeta(2\rho)n^{\rho}}{\rho\zeta'(\rho)}+\frac{\lambda(n)}{2}+\frac{1}{2\pi i}\int_{\epsilon-i\infty}^{\epsilon+i\infty}\frac{\zeta(2s)n^{s}}{s\zeta(s)}ds \nonumber \\ +O\left(1+\frac{n\log n}{T}+\frac{n}{T^{1-\epsilon}\log n}\right) \label{humph1}
\end{align}
for arbitrary $0<\epsilon<1/2$. Note that 
\begin{equation}
1+\frac{n\log n}{T}+\frac{n}{T^{1-\epsilon}\log n}=O(1)+O\left(\frac{n^{2}}{T^{1-\epsilon}}\right) \label{conven}
\end{equation}
and additionally \cite{fawaz} \cite{humphries}
\begin{equation}
\frac{1}{2\pi i}\int_{\epsilon-i\infty}^{\epsilon+i\infty}\frac{\zeta(2s)n^{s}}{s\zeta(s)}ds=1+O\left(\frac{1}{n^{1/2}}\right). \label{humphconv}
\end{equation}
Combining (\ref{humph1}), (\ref{conven}), and (\ref{humphconv}) gives 
\begin{align}
L(n)=\frac{n^{1/2}}{\zeta(1/2)}+\sum_{\left|\gamma\right|<T}\frac{\zeta(2\rho)n^{\rho}}{\rho\zeta'(\rho)}+\frac{\lambda(n)}{2}+O(1)+O\left(\frac{1}{n^{1/2}}\right)+O\left(\frac{n^{2}}{T^{1-\epsilon}}\right). \label{pre suml big}
\end{align}
Then since $L(n-1)=L(n)-\lambda(n)$ we have (\ref{suml big}).
\end{proof}

\begin{lemma}
Under RH, for any $\varepsilon>0$ there exists an $N_{1}(\varepsilon)<\infty$ such that for all $N\geq N_{1}(\varepsilon)$ 
\begin{align}
\sum_{n\leq N}\frac{\lambda(n)}{n^{s}}=\frac{\zeta(2s)}{\zeta(s)}+\textnormal{Err}_{1}(s,N) \label{elem finite2}
\end{align}
with 
\begin{align}
\left|\textnormal{Err}_{1}(s,N)\right|=\left(1+\left|\frac{s}{1/2+\varepsilon-s}\right|\right)O\left(\frac{1}{N^{s-1/2-\varepsilon}}\right) \label{elem err2}
\end{align}
for $\textnormal{Re}(s)>1/2$ where the implied constant in (\ref{elem err2})'s $O(.)$ term has no dependence on $s$. Additionally for all $N\geq N_{2}$ (unconditionally)
\begin{align}
\sum_{n\leq N}\frac{\lambda^{2}(n)}{n^{s}}=\sum_{n\leq N}\frac{1}{n^{s}}=\zeta(s)+\textnormal{Err}_{2}(s,N) \label{dlmf gen finite2}
\end{align}
with
\begin{align}
\textnormal{Err}_{2}(s,N)=-\frac{1}{s-1}\frac{1}{N^{s-1}} \label{dlmf gen err2}
\end{align}
for $\textnormal{Re}(s)>1$ where $N_{2}<\infty$ and the implied constant in (\ref{dlmf gen err2})'s $O(.)$ term similarly has no dependence on $s$. For $s\in \mathbb{R}$ with $s>1$ we also have 
\begin{equation}
0\leq \left|\textnormal{Err}_{2}(s,N)\right|<\zeta(s). \label{dlmf gen err22}
\end{equation}
\label{elem lemma2}
\end{lemma}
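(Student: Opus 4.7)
The proof parallels that of Lemma \ref{elem lemma} with $\mu,M$ replaced by $\lambda,L$ and the square-free distribution replaced by the trivial identity $\lambda^{2}(n)=1$. The first step is to convert the RH-equivalent bound (\ref{suml rh}) into the quantitative statement that for any $\varepsilon>0$ there exist $N_{1}(\varepsilon)<\infty$ and a finite constant $C_{1}(\varepsilon)$ with $|L(N)|\leq C_{1}(\varepsilon)N^{1/2+\varepsilon}$ for all $N\geq N_{1}(\varepsilon)$. This is the direct analogue of (\ref{mertens rh useful}) and is the sole place where RH is invoked.

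For the first identity (\ref{elem finite2})--(\ref{elem err2}), I would apply Abel summation to the tail of the Dirichlet series (\ref{elem2}), writing
\begin{equation*}
\sum_{n\leq N}\frac{\lambda(n)}{n^{s}}=\frac{\zeta(2s)}{\zeta(s)}+\frac{L(N)}{N^{s}}-s\int_{N}^{\infty}\frac{L(u)}{u^{s+1}}du
\end{equation*}
for $\textnormal{Re}(s)>1/2$, with the identification $\zeta(2s)/\zeta(s)=s\int_{1}^{\infty}L(u)u^{-s-1}du$ obtained first in the absolutely convergent regime $\textnormal{Re}(s)>1$ and then extended via the RH bound on $L$ to $\textnormal{Re}(s)>1/2$. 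Substituting $|L(u)|\leq C_{1}(\varepsilon)u^{1/2+\varepsilon}$ and evaluating the elementary integral $\int_{N}^{\infty}u^{-s-1/2-\varepsilon}du=N^{-(s-1/2-\varepsilon)}/(s-1/2-\varepsilon)$ then produces the two explicit factors $1$ and $|s/(1/2+\varepsilon-s)|$ in (\ref{elem err2}), with an implied constant depending only on $C_{1}(\varepsilon)$ and hence free of $s$-dependence.

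For the second identity (\ref{dlmf gen finite2})--(\ref{dlmf gen err2}), the argument is more direct because $\lambda^{2}(n)=1$ identically, so $\sum_{n\leq N}\lambda^{2}(n)/n^{s}=\sum_{n\leq N}1/n^{s}$. I would write this as $\zeta(s)-\sum_{n>N}1/n^{s}$ for $\textnormal{Re}(s)>1$ and evaluate the tail by partial summation with $\sum_{n\leq u}1=\lfloor u\rfloor$; the leading contribution from the linear term $u$ yields $-1/((s-1)N^{s-1})$, matching (\ref{dlmf gen err2}), while the $O(1)$ contribution from the fractional part $\{u\}$ is absorbed into the implicit $O(\cdot)$ convention. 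The final bound (\ref{dlmf gen err22}) for real $s>1$ follows immediately from positivity and monotonicity of the partial sums of $\zeta(s)$.

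Neither step presents a substantive obstacle; both are standard truncation estimates, one leveraging RH and one completely elementary. The only mild technical point is ensuring uniformity in $s$ of the implicit $O$-constants, and this works out automatically since all $s$-dependence has been pushed into the closed-form factors $|s/(1/2+\varepsilon-s)|$ and $1/(s-1)$.
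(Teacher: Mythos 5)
Your proposal is correct and follows essentially the same route as the paper, which simply remarks that the proof mirrors Lemma~\ref{elem lemma}: partial summation applied to the tails of (\ref{elem2}) and (\ref{dlmf gen2}), with the RH-conditional bound (\ref{suml rh}) on $L$ playing the role of (\ref{mertens rh}). One small point in your favor: you correctly observe that the fractional-part contribution $s\int_{N}^{\infty}\{u\}u^{-s-1}\,du$ produces an extra $O\left(|s|/N^{\textnormal{Re}(s)}\right)$ term; the paper's displayed equation (\ref{dlmf gen err2}) asserts exact equality without it even though the surrounding text refers to an implied $O(\cdot)$ constant, so your explicit accounting for it is the more careful reading and costs nothing downstream since the term is subdominant to $-1/((s-1)N^{s-1})$ in the regime $s=1+\delta$ with $\delta\to 0$.
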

\begin{proof}
The proof very closely follows that of Lemma \ref{elem lemma}, using (\ref{suml rh}) instead of (\ref{mertens rh}) for the proof of (\ref{elem finite2})-(\ref{elem err2}) by partial summation, and also applying partial summation for the proof of (\ref{dlmf gen finite2})-(\ref{dlmf gen err2}). Similarly to (\ref{dlmf gen err2}), (\ref{dlmf gen err22}) is clear from the fact that $\sum_{n}\frac{1}{n^{s}}$ is real, strictly positive, and convergent for $s\in \mathbb{R}$ with $s>1$. 
\end{proof}

Again, note that (\ref{elem err2})'s $\varepsilon$ is distinct from (\ref{suml big})'s $\epsilon$. 

\begin{lemma}
Let $X(T)$ be defined as in (\ref{x bound}) and
\begin{align}
S(T)=\textnormal{max}\left\{\sum_{0<\gamma<T}\left|\frac{\zeta(2\rho)}{\rho\zeta'(\rho)}\right|^{2}, \left|\sum_{\left|\gamma\right|<T}\frac{\zeta(2\rho)}{\rho\left|\zeta'(\rho)\right|^{2}}\right|, \right. \nonumber \\ \left. \sup_{0\leq x \leq \frac{X(T)}{2}}\left\{\left|\sum_{\left|\gamma\right|<T}\frac{\left|\zeta(2\rho)\right|^{2}x\sum_{k=2}^{\infty}\overline{\zeta^{(k)}}(\rho)\frac{x^{k-2}}{k!}}{\rho\left|\zeta'(\rho)\right|^{2}\zeta\left(\bar{\rho}+x\right)}\right|, \left|\sum_{\left|\gamma\right|<T}\frac{\zeta(2\rho)}{\rho\zeta'(\rho)}\left|\frac{1+x-\rho}{\varepsilon-1/2-x+\rho}\right|\right|,\right. \right. \nonumber \\ \left. \left. \left|\sum_{\left|\gamma\right|<T}\frac{\zeta(2\rho)x\sum_{k=2}^{\infty}\overline{\zeta^{(k)}}(\rho)\frac{x^{k-2}}{k!}}{\rho\left|\zeta'(\rho)\right|^{2}\zeta\left(\bar{\rho}+x\right)}\right|^{1/2}\right\} \right\}.
\label{da sup2}
\end{align}
Then define 
\begin{equation}
\delta(T)=\frac{1}{N}\textnormal{min}\left\{\frac{1}{\textnormal{max}\left\{S(T),1\right\}},X(T),\varepsilon\right\} \label{sneaky2}
\end{equation}
with $\varepsilon>0$. Then, under RH and SZC, $\delta(T)$ as defined by (\ref{x bound}), (\ref{da sup2}), and (\ref{sneaky2}) is greater than 0 for all $T<\infty$. Additionally (\ref{sneaked}) still holds along with
\begin{equation}
O\left(\delta(T)\right)\sum_{0<\gamma<T}\left|\frac{\zeta(2\rho)}{\rho\zeta'(\rho)}\right|^{2}=O\left(\frac{1}{N}\right), \label{probterm12}
\end{equation}
\begin{equation}
O\left(\delta(T)\right) \left|\sum_{\left|\gamma\right|<T}\frac{\zeta(2\rho)}{\rho\left|\zeta'(\rho)\right|^{2}}\right|=O\left(\frac{1}{N}\right), \label{probterm22}
\end{equation}
\begin{equation}
O\left(\delta(T)\right)\left|\sum_{\left|\gamma\right|<T}\frac{\left|\zeta(2\rho)\right|^{2}\delta(T)\sum_{k=2}^{\infty}\overline{\zeta^{(k)}}(\rho)\frac{\left(\delta(T)\right)^{k-2}}{k!}}{\rho\left|\zeta'(\rho)\right|^{2}\zeta\left(\bar{\rho}+\delta(T)\right)}\right|=O\left(\frac{1}{N}\right), \label{probterm32}
\end{equation}
\begin{equation}
O\left(\left(\delta(T)\right)^{2}\right)\left|\sum_{\left|\gamma\right|<T}\frac{\zeta(2\rho)\delta(T)\sum_{k=2}^{\infty}\overline{\zeta^{(k)}}(\rho)\frac{\left(\delta(T)\right)^{k-2}}{k!}}{\rho\left|\zeta'(\rho)\right|^{2}\zeta\left(\bar{\rho}+\delta(T)\right)}\right|=O\left(\frac{1}{N}\right), \label{probterm42}
\end{equation}
and 
\begin{equation}
O\left(\frac{\delta(T)}{N^{\delta(T)-\varepsilon}}\right)\left|\sum_{\left|\gamma\right|<T}\frac{\zeta(2\rho)}{\rho\zeta'(\rho)}\left|\frac{1+\delta(T)-\rho}{\varepsilon-1/2-\delta(T)+\rho}\right|\right|=O\left(\frac{1}{N^{1+\delta(T)-\varepsilon}}\right) \label{probterm52}
\end{equation}
as $N\rightarrow \infty$. 
\label{delta lemma2}
\end{lemma}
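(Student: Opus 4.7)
The plan is to mirror the proof of Lemma \ref{delta lemma}, adapting it to the new $\zeta(2\rho)$-weighted sums and the additional square-root entry in (\ref{da sup2}). The first task is to show $S(T) < \infty$ for all finite $T$, from which $\delta(T) > 0$ follows immediately by (\ref{sneaky2}); the remaining bounds (\ref{probterm12})--(\ref{probterm52}) and (\ref{sneaked}) then drop out essentially by the definition of $\delta(T)$.

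To establish finiteness of $S(T)$, I note that each sum in (\ref{da sup2}) runs over the finite set $\{\gamma : |\gamma| < T\}$ (or $0 < \gamma < T$), so it suffices to verify that every individual summand is finite. Under SZC, $\zeta'(\rho) \neq 0$; by analyticity of $\zeta(s)$ away from $s=1$, the values $\zeta(2\rho) = \zeta(1+2i\gamma)$ are finite for $\gamma \neq 0$; the Taylor remainder $\sum_{k\geq 2} \overline{\zeta^{(k)}}(\rho)\,x^{k-2}/k!$ converges absolutely for $0 \leq x \leq X(T)/2$ by (\ref{x bound}), just as in the bounds (\ref{mini x bound})--(\ref{harder sum}) of Lemma \ref{delta lemma}; and $\zeta(\bar{\rho}+x)$ is nonzero for all sufficiently small $x \geq 0$, since $\bar{\rho} = 1/2 - i\gamma$ is neither a zero nor the pole of $\zeta$, and by shrinking $X(T)$ at the outset we may guarantee this uniformly for the finite collection of relevant $\gamma$. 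Combining these facts gives $S(T) < \infty$.

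Next I would derive (\ref{probterm12}), (\ref{probterm22}), (\ref{probterm32}), and (\ref{probterm52}) directly from the defining inequality $\delta(T) \cdot \max\{S(T),1\} \leq 1/N$. For each of the first three bounds the sum appearing inside the $O(\delta(T))$ factor is, in absolute value, one of the quantities controlled by $S(T)$ in (\ref{da sup2}) with $x = \delta(T) \leq X(T)/2$ (which holds for $N \geq 2$), so multiplying by $O(\delta(T))$ yields $O(1/N)$. For (\ref{probterm52}) the inner sum is again bounded by $S(T)$, and the extra factor $N^{-\delta(T)+\varepsilon}$ is simply carried through to produce $O(1/N^{1+\delta(T)-\varepsilon})$. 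The estimate (\ref{sneaked}) is immediate from (\ref{sneaky2}).

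The only non-routine step, and the main structural novelty relative to Lemma \ref{delta lemma}, is (\ref{probterm42}): this is precisely why the last entry of (\ref{da sup2}) is a square root. Letting $\Sigma$ denote the sum inside the absolute value in (\ref{probterm42}), the definition of $S(T)$ gives $|\Sigma|^{1/2} \leq S(T)$, hence $|\Sigma| \leq S(T)^2$, and therefore
\[
O\!\left(\delta(T)^{2}\right) \cdot |\Sigma| \;=\; O\!\left(\bigl(\delta(T)\,S(T)\bigr)^{2}\right) \;=\; O\!\left(1/N^{2}\right) \;=\; O(1/N),
\]
as required. No analytic input beyond what already appears in Lemma \ref{delta lemma} is needed; the $\zeta(2\rho)$ factors do not introduce new singularities along the critical line because $2\rho \neq 1$, and the square-root entry in $S(T)$ exists solely to convert the $\delta(T)^{2}$ prefactor of (\ref{probterm42}) into the required $O(1/N)$.
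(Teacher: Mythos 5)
Your proposal follows essentially the same route as the paper: show each sum in (\ref{da sup2}) is a finite sum of finite summands so that $S(T)<\infty$, then read off (\ref{probterm12})--(\ref{probterm52}) and (\ref{sneaked}) from $\delta(T)\le 1/(N\max\{S(T),1\})$. Your unpacking of (\ref{probterm42}) via the square-root entry --- $|\Sigma|^{1/2}\le S(T)$ so $\delta(T)^{2}|\Sigma|\le(\delta(T)S(T))^{2}\le 1/N^{2}$ --- is a welcome explication of what the paper declares ``clear from (\ref{da sup2}) and (\ref{sneaky2}).''

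However, there is a genuine error in the finiteness argument. You assert that ``$\zeta(\bar\rho+x)$ is nonzero for all sufficiently small $x\ge 0$, since $\bar\rho=1/2-i\gamma$ is neither a zero nor the pole of $\zeta$.'' Under RH the nontrivial zeros come in conjugate pairs, so $\bar\rho$ \emph{is} a zero of $\zeta$, and $\zeta(\bar\rho+0)=0$. Since the supremum in (\ref{da sup2}) is taken over $0\le x\le X(T)/2$ (including the endpoint $x=0$), your argument as stated does not establish that the relevant quantities are bounded near $x=0$, and ``shrinking $X(T)$'' cannot push $x=0$ out of the range. The correct point --- which the paper makes in (\ref{phew bound}) for Lemma \ref{delta lemma} and implicitly inherits here --- is that the ratio $x/\zeta(\bar\rho+x)$ has a removable singularity at $x=0$: writing $\zeta(\bar\rho+x)=\zeta'(\bar\rho)x+O(x^{2})$ and invoking SZC to get $\zeta'(\bar\rho)\ne 0$ gives $x/\zeta(\bar\rho+x)\to 1/\zeta'(\bar\rho)$, hence boundedness on the compact interval $[0,X(T)/2]$. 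This is precisely where SZC enters the finiteness-of-$S(T)$ step, and your phrasing obscures that dependence. Once this is repaired, the rest of the argument is sound and agrees with the paper's.
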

\begin{proof}
We first note that under RH $\textnormal{Re}(2\rho)=1$ for all $\rho$ and hence, by the fact that $\zeta(s)$ is analytic on $\textnormal{Re}(s)=1$ with $\textnormal{Im}(s)\neq 0$, 
\begin{equation}
\left|\zeta(2\rho)\right|<\infty \label{zeta2rhofinite}
\end{equation}
for all $\rho$. We next note that, similarly to (\ref{write it out}), 
\begin{equation}
\sum_{\left|\gamma\right|<T}\frac{\left|\zeta(2\rho)\right|^{2}}{\rho\left|\zeta'(\rho)\right|^{2}}=\sum_{0<\gamma<T}\left|\frac{\zeta(2\rho)}{\rho\zeta'(\rho)}\right|^{2}. \label{write it out2}
\end{equation}
It is then clear that 
\begin{equation}
\sum_{0<\gamma<T}\left|\frac{\zeta(2\rho)}{\rho\zeta'(\rho)}\right|^{2}<\infty \label{easy2}
\end{equation}
for all $T<\infty$ by (\ref{easy sum}) and (\ref{zeta2rhofinite}). We next note that 
\begin{equation}
\left|\sum_{\left|\gamma\right|<T}\frac{\left|\zeta(2\rho)\right|^{2}x\sum_{k=2}^{\infty}\overline{\zeta^{(k)}}(\rho)\frac{x^{k-2}}{k!}}{\rho\left|\zeta'(\rho)\right|^{2}\zeta\left(\bar{\rho}+x\right)}\right|<\infty \label{harder2}
\end{equation}
for all $0\leq x<X(T)$ and $T<\infty$ by (\ref{harder sum}) and (\ref{zeta2rhofinite}), as well as  
\begin{equation}
\left|\sum_{\left|\gamma\right|<T}\frac{\zeta(2\rho)}{\rho\zeta'(\rho)}\left|\frac{1+x-\rho}{\varepsilon-1/2-x+\rho}\right|\right|<\infty \label{clear2}
\end{equation}
for all $\varepsilon>0$, $0\leq x<\infty$, and $T<\infty$ by (\ref{clear sum}) and (\ref{zeta2rhofinite}). We additionally note that by SZC and (\ref{zeta2rhofinite})
\begin{equation}
\left|\sum_{\left|\gamma\right|<T}\frac{\zeta(2\rho)}{\rho\left|\zeta'(\rho)\right|^{2}}\right|<\infty \label{easy3}
\end{equation}
for all $T<\infty$ and, by (\ref{harder sum}) with (\ref{zeta2rhofinite}),
\begin{equation}
\left|\sum_{\left|\gamma\right|<T}\frac{\zeta(2\rho)x\sum_{k=2}^{\infty}\overline{\zeta^{(k)}}(\rho)\frac{x^{k-2}}{k!}}{\rho\left|\zeta'(\rho)\right|^{2}\zeta\left(\bar{\rho}+x\right)}\right|<\infty \label{harder3}
\end{equation}
for all $0\leq x<X(T)$ and $T<\infty$. By (\ref{easy2})-(\ref{harder3}) we have (\ref{da sup2})'s $S(T)<\infty$ for all $T<\infty$ and hence (\ref{sneaky2})'s $\delta(T)>0$ for all $T<\infty$. The results (\ref{probterm12})-(\ref{probterm52}) are clear from (\ref{da sup2}) and (\ref{sneaky2}).
\end{proof}

We next prove Theorem \ref{mainmainthm2}:

\subsubsection{Proof of Theorem \ref{mainmainthm2}}
\begin{proof}
We first apply Lemma \ref{humphlemma}'s result (\ref{suml big}) to write
\begin{align}
\sum_{n\leq N}\frac{\lambda(n)L(n-1)}{n^{1+\delta}}=\frac{1}{\zeta(1/2)}\sum_{n\leq N}\frac{\lambda(n)}{n^{1/2+\delta}}+\sum_{\left|\gamma\right|<T}\frac{\zeta(2\rho)}{\rho\zeta'(\rho)}\sum_{n\leq N}\frac{\lambda(n)}{n^{1-\rho+\delta}}-\frac{1}{2}\sum_{n\leq N}\frac{\lambda^{2}(n)}{n^{1+\delta}} \nonumber \\
+O\left(1\right)\sum_{n\leq N}\frac{\lambda(n)}{n^{1+\delta}}+O\left(\sum_{n\leq N}\frac{1}{n^{3/2+\delta}}\right)+O\left(\frac{1}{T^{1-\epsilon}}\sum_{n\leq N}n^{1-\textnormal{Re}(\delta)}\right). \label{suml raw}
\end{align}
We then apply Lemma \ref{elem lemma2}'s results (\ref{elem finite2})-(\ref{elem err2}) and (\ref{dlmf gen finite2})-(\ref{dlmf gen err2}) in (\ref{suml raw}) to write
\begin{align}
\sum_{n\leq N}\frac{\lambda(n)L(n-1)}{n^{1+\delta}}=\frac{1}{\zeta(1/2)}\left(\frac{\zeta(1+2\delta)}{\zeta(1/2+\delta)}+\left(1+\left|\frac{1/2+\delta}{\varepsilon-\delta}\right|\right)O\left(\frac{1}{N^{\textnormal{Re}(\delta)-\varepsilon}}\right)\right) \nonumber \\ +\sum_{\left|\gamma\right|<T}\frac{\zeta(2\rho)}{\rho\zeta'(\rho)}\left(\frac{\zeta(2(1-\rho)+2\delta)}{\zeta(1-\rho+\delta)}+\left(1+\left|\frac{1+\delta-\rho}{\varepsilon-1/2-\delta+\rho}\right|\right)O\left(\frac{1}{N^{\textnormal{Re}(\delta)-\varepsilon}}\right)\right) \nonumber\\ -\frac{1}{2}\left(\zeta(1+\delta)-\frac{1}{\delta N^{\textnormal{Re}(\delta)}}\right) \nonumber\\ +O(1)\left(\frac{\zeta(2+2\delta)}{\zeta(1+\delta)}+\left(1+\left|\frac{1+\delta}{\varepsilon-1/2-\delta}\right|\right)O\left(\frac{1}{N^{1/2+\textnormal{Re}(\delta)-\varepsilon}}\right)\right) \nonumber \\
+O\left(\frac{1}{N^{1/2+\textnormal{Re}(\delta)}}\right)+O\left(\frac{N^{2-\textnormal{Re}(\delta)}}{T^{1-\epsilon}}\right)
\label{suml less raw}
\end{align}
for $\textnormal{Re}(\delta)>0$ and $N\geq \textnormal{max}\left\{N_{1}(\varepsilon),N_{2}\right\}$ where we again choose $0<\varepsilon<1/2$. Note that there is no hidden dependence on $\delta$ in (\ref{suml less raw})'s $O(.)$ terms. We then consider real $\delta>0$ and divide both sides of (\ref{suml less raw}) by $\zeta(1+\delta)$. We apply (\ref{laurent}), (\ref{long one}), the fact that 
\begin{equation}
\frac{1}{\zeta(1/2+\delta)}=\frac{1}{\zeta(1/2)}+O(\delta),
\end{equation}
and that, by RH and the analyticity of $\zeta(s)$ with $\textnormal{Re}(s)=1$ and $\textnormal{Im}(s)\neq 0$,
\begin{equation}
\zeta\left(2(1-\rho)+2\delta \right)=\zeta\left(2\bar{\rho}\right)+O(\delta).  \label{zetadubrho}
\end{equation}
This gives
\begin{align}
\frac{1}{\zeta(1+\delta)}\sum_{n\leq N}\frac{\lambda(n)L(n-1)}{n^{1+\delta}} \nonumber \\ =\frac{1}{2\zeta^{2}(1/2)}+O\left(\delta\right)+\left(1+\frac{1/2+\delta}{\left|\varepsilon-\delta\right|}\right)O\left(\frac{\delta}{N^{\delta-\varepsilon}}\right) \nonumber \\ 
+\left(\sum_{\left|\gamma\right|<T}\frac{\left|\zeta(2\rho)\right|^{2}}{\rho\left|\zeta'(\rho)\right|^{2}}-\delta\sum_{\left|\gamma\right|<T}\frac{\left|\zeta(2\rho)\right|^{2}\delta\sum_{k=2}^{\infty}\overline{\zeta^{(k)}}(\rho)\frac{\delta^{k-2}}{k!}}{\rho\left|\zeta'(\rho)\right|^{2}\zeta\left(\bar{\rho}+\delta\right)}\right)\left(1+O(\delta)\right) \nonumber\\ 
+O(\delta)\left(\sum_{\left|\gamma\right|<T}\frac{\zeta(2\rho)}{\rho\left|\zeta'(\rho)\right|^{2}}-\delta\sum_{\left|\gamma\right|<T}\frac{}{}\frac{\zeta(2\rho)\delta\sum_{k=2}^{\infty}\overline{\zeta^{(k)}}(\rho)\frac{\delta^{k-2}}{k!}}{\rho\left|\zeta'(\rho)\right|^{2}\zeta\left(\bar{\rho}+\delta\right)}\right)\left(1+O(\delta)\right)
\nonumber \\
+O\left(\frac{\delta}{N^{\delta-\varepsilon}}\right)\left(\sum_{\left|\gamma\right|<T}\frac{\zeta(2\rho)}{\rho\zeta'(\rho)}+\sum_{\left|\gamma\right|<T}\frac{\zeta(2\rho)}{\rho\zeta'(\rho)}\left|\frac{1+\delta-\rho}{\varepsilon-1/2-\delta+\rho}\right|\right)
\nonumber\\ 
-\frac{1}{2}\left(1-\frac{1}{N^{\delta}}\right)+O(\delta)\nonumber\\
+O\left(\delta^{2}\right)+O\left(\frac{\delta}{N^{1/2+\delta-\varepsilon}}\right) \nonumber \\
+O\left(\frac{\delta}{N^{1/2+\delta}}\right)+O\left(\frac{\delta N^{2-\delta}}{T^{1-\epsilon}}\right) \label{suml mess}
\end{align}
Where, again, there is no hidden dependence on $\delta$ in (\ref{suml mess})'s $O(.)$ terms. Note that the terms on (\ref{suml mess})'s sixth line come from (\ref{suml less raw})'s third line and thus, from Lemma \ref{elem lemma2}'s result (\ref{dlmf gen err22}), we have
\begin{equation}
0\leq \frac{1}{2N^{\delta}}<\frac{1}{2}. \label{dlmf gen err 32}
\end{equation}
The third and fourth lines of (\ref{suml mess}) are an expanded form of
\begin{align}
\sum_{\left|\gamma\right|<T}\frac{\zeta(2\rho)}{\rho\zeta'(\rho)}\left(\left(\frac{1}{\overline{\zeta'}(\rho)}-\frac{\delta^{2}\sum_{k=2}^{\infty}\overline{\zeta^{(k)}}(\rho)\frac{\delta^{k-2}}{k!}}{\overline{\zeta'}(\rho)\zeta\left(\bar{\rho}+\delta\right)}\right)\left(1+O(\delta)\right)\right)\left(\zeta\left(2\bar{\rho}\right)+O(\delta)\right),
\end{align}
which is reached by applying (\ref{laurent}), (\ref{long one}), and (\ref{zetadubrho}) to (\ref{suml less raw})'s second line. The seventh line of (\ref{suml mess}) is gotten from (\ref{suml less raw})'s fourth line and (\ref{easy simple fact}).

We note that, although $\sum_{\rho}\frac{\zeta(2\rho)}{\rho\zeta'(\rho)}$ is not absolutely convergent, from (\ref{pre suml big}) with $n=1$ we have the conditional convergence
\begin{equation}
\lim_{T\rightarrow \infty}\sum_{\left|\gamma\right|<T}\frac{\zeta(2\rho)}{\rho\zeta'(\rho)}=O(1). \label{suml convenient}
\end{equation}
We next set $\delta=\delta(T)$ in (\ref{suml mess}) with $\delta(T)$ defined by Lemma \ref{delta lemma2}'s (\ref{da sup2})-(\ref{sneaky2}) with (\ref{x bound}). We then apply (\ref{probterm12})-(\ref{probterm52}) along with (\ref{write it out2}) and (\ref{suml convenient}) to write 
\begin{align}
\frac{1}{\zeta\left(1+\delta(T)\right)}\sum_{n\leq N}\frac{\lambda(n)L(n-1)}{n^{1+\delta(T)}} \nonumber\\ =\frac{1}{2\zeta^{2}(1/2)}+O\left(\frac{1}{N}\right)+\left(1+\frac{1/2+O(1/N)}{\varepsilon-O(1/N)}\right)O\left(\frac{1}{N^{1+O(1/N)-\varepsilon}}\right) \nonumber\\ 
+\sum_{0<\gamma<T}\left|\frac{\zeta(2\rho)}{\rho\zeta'(\rho)}\right|^{2}+O\left(\frac{1}{N}\right) \nonumber\\ +O\left(\frac{1}{N^{1+O(1/N)-\varepsilon}}\right) \nonumber\\ -\frac{1}{2}\left(1-\frac{1}{N^{\delta(T)}}\right)+O\left(\frac{1}{N}\right) \nonumber \\
+O\left(\frac{1}{N^{2}}\right) +O\left(\frac{1}{N^{3/2+O(1/N)-\varepsilon}}\right) \nonumber\\ 
+O\left(\frac{1}{N^{3/2+O(1/N)}}\right) +O\left(\frac{N^{1-O(1/N)}}{T^{1-\epsilon}}\right), \label{suml mess big os}
\end{align}
which we may simplify, recalling that we set $0<\varepsilon<1/2$, to give
\begin{align}
\frac{1}{\zeta\left(1+\delta(T)\right)}\sum_{n\leq N}\frac{\lambda(n)L(n-1)}{n^{1+\delta(T)}}=\frac{1}{2}\left(\frac{1}{\zeta^{2}(1/2)}-\left(1-\frac{1}{N^{\delta(T)}}\right)\right) \nonumber\\ +\sum_{0<\gamma<T}\left|\frac{\zeta(2\rho)}{\rho\zeta'(\rho)}\right|^{2}+O\left(\frac{1}{N}\right)+O\left(\frac{1}{N^{1+O(1/N)-\varepsilon}}\right)+O\left(\frac{N^{1-O(1/N)}}{T^{1-\epsilon}}\right). \label{we really there! 2}
\end{align} 
We then apply (\ref{doi}), (\ref{nset}), and (\ref{nnice}) in (\ref{we really there! 2}) and take $T\rightarrow \infty$ to give the result (\ref{main lim 2}), where we have combined the arbitrary $\epsilon$ and $c$. By (\ref{dlmf gen err 32}) we may conclude that $0\leq \frac{1}{N^{\delta(T)}}<1$, which, with (\ref{nset}), completes the proof.

\end{proof}

We then prove Corollary \ref{maincor2}:
\subsubsection{Proof of Corollary \ref{maincor2}}
\begin{proof}
We note that
\begin{equation}
\frac{1}{2}\left(\frac{1}{\zeta^{2}(1/2)}-1+T^{(c-1)\delta(T)}\right)=O(1)
\end{equation}
as $T\rightarrow \infty$. We also note that the series (\ref{write it out2}) is strictly positive. Therefore, if (\ref{main lim 2})'s right-hand side is negative, then 
\begin{equation}
\sum_{0<\gamma<T}\left|\frac{\zeta(2\rho)}{\rho\zeta'(\rho)}\right|^{2}=O(1)
\end{equation}
implying
\begin{equation}
\left|\frac{\zeta(2\rho)}{\rho\zeta'(\rho)}\right|^{2}=o(1) \label{obviously we have2}
\end{equation}
and hence
\begin{equation}
\frac{1}{\zeta'(\rho)}=o\left(\left|\frac{\rho}{\zeta(2\rho)}\right|\right) \label{obvobv}
\end{equation}
as $\gamma \rightarrow \infty$. We then note that under RH \cite{littlewood1line} \cite{titchmarsh} 
\begin{equation}
\frac{1}{\zeta(1+it)}=O\left(\log \log t\right) \label{1linebd}
\end{equation}
as $t\rightarrow \infty$. Combining (\ref{obvobv}) and (\ref{1linebd}) with $\rho=1/2+i\gamma$ and noting that 
\begin{equation}
\log \log \left|2\gamma\right|=\log \log \left|\gamma\right|+O\left(\frac{1}{\log \left|\gamma\right|}\right)
\end{equation}
gives (\ref{base ub 2}).
\end{proof}

\end{document}